\newcommand{\Z}{\mathbb{Z}}
\newcommand{\mP}{\mathbb{P}}
\newcommand{\Real}{\mathbb{R}}
\newcommand{\Nat}{\mathbb{N}}
\newcommand{\su}{\ensuremath{s\operatorname{-}u}}
\newtheorem{theorem}{Theorem}[section]
\newtheorem{proposition}{Proposition}[section]
\newtheorem*{thmA}{Theorem A}
\newtheorem*{thmB}{Theorem B}
\newcounter{exa}
\newtheorem{example}[exa]{Example}
\newtheorem{remark}{Remark}[section]
\theoremstyle{definition}
\newtheorem{definition}{Definition}[section]
\subjclass{37C40, 37D30, 60K37, 37H99}
\keywords{Random Walk in a Random Environment, Stationary Measures, Geodesic Flow}
\title[A random walk defined by a geodesic flow]{A Symmetric Random Walk Defined By The Time-One Map Of A Geodesic Flow}
\author{Pablo D. Carrasco}
\email{pdcarrasco@mat.ufmg.br}
\author{T\'ulio Vales}
\email{tuliovf@ufmg.br}
\address{ICEx-UFMG, Av.\@ Presidente Ant\^onio Carlos 6627, Belo Horizonte-MG, BR31270-901}
\date{\today}
\begin{document}

\begin{abstract}
    In this note we consider a symmetric random walk defined by a $(f,f^{-1})$ Kalikow type system, where $f$ is the time-one map of the geodesic flow corresponding to an hyperbolic manifold. We provide necessary and sufficient conditions for the existence of an stationary measure for the walk that is equivalent to the volume in the corresponding unit tangent bundle. Some dynamical consequences for the random walk are deduced in these cases. 
\end{abstract}

\maketitle

\section{Introduction and Main Theorems}\label{sec.Intro}

For the purposes of this work, a \emph{dynamical system} consists of a compact metric space $M$ together with a homeomorphism $f:M\to M$. The set of continuous functions on $M$ is denoted by $C(M)$, and we consider the uniform norm on it. The Borel $\sigma$-algebra on $M$ is denoted $\mathcal{B}_M$; all measures considered on $M$ are Borel probability measures, and we denote this set by $\mathcal{P}(M)$.

The problem that we are interested in is the following: given a continuous function $p:M\to (0,1)$, the map $f$ determines a random (Markov) process on $M$ where for a given point $x$ the probability of transition to $f(x)$ is $p(x)$, and the probability of transition to $f^{-1}(x)$ is $1-p(x)$. This way, each $x$ defines a random walk in $\Z$: denote by $S_n^x$ the position of a particle starting in the origin $0\in\Z$ at time $n\geq 0$, and let $X_n^x:\Z\to\{-1,1\}$ be the random variable that indicates whether the particle has jumped left or right at this time $n$. Then
\begin{align}\label{eq.probrwx}
&\mathrm{Prob}(X_{n+1}^x=1|S_n^x=k)=p(f^kx)\\
&\mathrm{Prob}(X_{n+1}^x=-1|S_n^x=k)=1-p(f^kx).
\end{align}
The resulting process is (a subclass of) what is called a \emph{Random Walk in Random Environment} (RWRE); choosing a point $x$ is considered as to fix some environment of the walk. We will summarize the parts of the theory needed in Section \ref{sec.RandomWalks}, and refer the interested reader to \cite{RWRE} for a thoroughgoing introduction to the topic.

For now we will state that the variation of the random walks with the environments is characterized by the Markov Operator $P_f:C(M)\to C(M)$,
\begin{equation}\label{def.P}
    P_f\psi(x)=p(x)\cdot \psi(fx)+(1-p(x))\cdot \psi(f^{-1}x).
\end{equation}
Since $P_f$ preserves continuous functions, it induces $P^{\ast}_f:\mathcal{P}(M)\to\mathcal{P}(M)$ with 
\begin{equation}\label{def.Past}
  \int \psi\cdot dP^{\ast}_f\mu=\int P_f\psi\cdot d\mu\quad \psi \in C(M), \mu \in \mathcal{P}(M).   
\end{equation}

If the environments are initially distributed with some probability $\mu$, the measure $(P^{\ast})^n\mu$ gives the distribution of the environments after $n$ units of time.

\smallskip

\begin{definition}
A measure $\nu\in \mathcal{P}(M)$ is $P$ - stationary if $P^{\ast}_f\nu=\nu$.
\end{definition} 

\smallskip

By a direct fix point argument one deduces the existence of at least one $P$ - stationary measure. Nevertheless, assuming that we start with a dynamically defined distribution $\mu$ (for example, if $\mu$ is $f$ - invariant) we are interested in the existence of a $P$ - stationary distribution that has some resemblance to $\mu$, in other words we want to address the following.

\smallskip

\noindent\textbf{Problem:} for an $f$ - invariant measure $\mu$ find a $P$ - stationary measure $\nu$ that is absolutely continuous/equivalent to $\mu$.

\smallskip

Questions of this type were first studied by Y. Sinai \cite{sinai1999simple} for irrational translations on tori, and put in a general framework by JP. Conze and Y. Guivarc \cite{Conze2000}. The case of Anosov diffeomorphisms is considered in \cite{simpleanosov} by V. Kaloshin and Y. Sinai. For a more recent article, and a up-to-date bibliography we direct the reader to the work of D. Dolgopyat and B. Fayad and M. Saprykina \cite{Dolgopyat2019}. We compare our results with the previous literature at the end of this part.

In this note we consider a different type of example that the ones already considered in the literature (either uniquely ergodic or completely hyperbolic), namely we will study the referred problem above when $f=f_1:M\to M$ is the time-one map of the geodesic flow corresponding to an hyperbolic manifold, whereas $\mu$ will be the Liouville (Lebesgue) measure. It seems that the available methods are not effective for studying the random walk defined by this kind of map, particularly in what is called the \emph{symmetric} case, i.e.\@ when
\[
    \int \log \frac{p}{1-p}\cdot d\mu=0.
\]
Here we use a geometrical approach to the problem. To state our result let us recall that in this case there exist a codimension-one distribution of the form $E^s\oplus E^u$ transverse to the flow direction. Although this distribution is non-integrable, both $E^s, E^u$ are and define what they are called the \emph{stable} and \emph{unstable} horocyclic foliations (in dimension three they are the well known horocycles of hyperbolic geometry). The leaves of these foliations are invariant by the action of the flow, and in particular by $f$. Moreover, $f$ exponentially contracts intrinsic distances for points in the same stable horocycle, while exponentially expands distances between points in the same unstable one. It is a well known fact (recalled in Section \ref{sec:PartialHyperbolic}) that the existence of these types of foliations persists by small $\mathcal{C}^1$ perturbations of $f$; for $g$ close to $f$  its contracting foliation $W^s_g$ will be called its \emph{stable foliation}, and its expanding foliation $W^u_g$ will be called its \emph{unstable foliation}.

\begin{definition}\label{def:suloop} Let $g:M\to M$ be a small $\mathcal{C}^1$ perturbation of $f$ so that its stable and unstable foliations $\mathcal{W}^s_g,\mathcal{W}^u_g$ are defined.
\begin{enumerate}
\item A set $\mathcal{C}=\{x_0,\ldots, x_{N-1},x_{N}=x_0\}$ is a \su\ loop (or periodic cycle) for $g$ if $x_i,x_{i+1}$ belong to the same leaf of $\mathcal{W}^s_g$ or $\mathcal{W}^u_g$, for all $i=0,\ldots, N-1$. 
\item If $\varphi:M\to \Real$ is H\"older we define $F(\mathcal{C})(\varphi)=\sum_{i=0}^{N-1}F(\mathcal{C};x_i\to x_{i+1})(\varphi)$  where
\[
F(\mathcal{C};x_i\to x_{i+1}):=\begin{cases}
\mathlarger{\sum_{n=0}^{+\infty}} \varphi(g^nx_i)-\varphi(g^nx_{i+1}) & x_{i+1}\in \mathcal{W}^s_g(x_i)\\
-\mathlarger{\sum_{n=-1}^{-\infty}} \varphi(g^nx_i)-\varphi(g^nx_{i+1}) & x_{i+1}\in \mathcal{W}^u_g(x_i).
\end{cases}.
\]
\end{enumerate}
\end{definition}

In the definition above, observe that since distances between points in the stable (unstable) foliations are contracted by $g$ (resp. $g^{-1}$) and $\varphi$ is H\"older, the series converge absolutely and $F(\mathcal{C})(\varphi)$ is well defined. The functionals $F(\mathcal{C})$ on the space of H\"older functions were introduced by A. Katok and A. Kononenko in \cite{Katok1996}, and provide an generalization of Livschitz' theory for hyperbolic systems. We are ready to state our main theorem.

\smallskip

\begin{thmA}
Consider $S$ a compact hyperbolic manifold, $M=T^1S$ its unit tangent bundle, $\mu$ the Liouville measure on $M$ and let $f:M\to M$ be the time-one map of the geodesic flow. Consider also $p:M\to (0,1)$ a H\"older continuous function such that
\[
    \int \log \varphi\cdot d\mu=0 \;,\quad \varphi=\frac{p}{1-p}.
\]
Then there exists $N$ a $\mathcal{C}^2$ neighborhood of $f$ such that for every $g\in N$ the following holds: the Random Walk on $M$ defined by $(g,p)$ has a $P$ - stationary measure equivalent to $\mu$ if and only if for every \su\ loop $\mathcal{C}$ it holds $F(\mathcal{C})(\log\varphi)=0$.

Moreover, the density of the $P$ - stationary measure is continuous. If furthermore $p$ is differentiable, then the $P$ - stationary measure is a smooth volume on $M$.
\end{thmA}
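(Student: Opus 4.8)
The plan is to translate the existence of an absolutely continuous $P$-stationary measure into the solvability over $g$ of the cohomological equation $v\circ g-v=\log\varphi$, and then to characterize that solvability through the functionals $F(\mathcal{C})$ via the cocycle-stability theory of Katok and Kononenko. First I would look for a stationary measure of the form $\nu=h\cdot\mu$ with $h>0$ $\mu$-a.e.\ and rewrite $P^{\ast}_g\nu=\nu$ as a pointwise balance equation for $h$ along the orbits of $g$. Read as a conservation law for the net probability current $J(y)$ crossing each edge $\{y,gy\}$ of the orbit graph of $g$, this equation (together with the invariance of $\mu$) forces, by a one-line telescoping computation, that $J$ is constant along every $g$-orbit. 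In the reversible case $J\equiv0$ the balance equation collapses to the detailed-balance identity $h\cdot p=(h\circ g)\cdot(1-p\circ g)$, which, after taking logarithms and setting $v=\log\!\bigl(h\cdot(1-p)\bigr)$, becomes $v\circ g-v=\log\varphi$; conversely any H\"older solution $v$ yields a continuous positive density $h=e^{v}/(1-p)$. So the problem splits into: (i) show that every a.c.\ stationary $\nu\sim\mu$ is reversible; and (ii) solve $v\circ g-v=\log\varphi$.

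For (i) I would use that $J$, being $g$-invariant, equals a constant $c$ a.e.\ by ergodicity of the reference measure (invariance, ergodicity and accessibility all persisting for $g$ near $f$), and then show $c=0$. This is where the symmetric hypothesis $\int\log\varphi\,d\mu=0$ is essential: it makes the Birkhoff sums $V^y(n)=\sum_{k=0}^{n-1}\log\varphi(g^ky)$ recurrent along $\mu$-a.e.\ orbit — either they are a.e.\ bounded (so $\log\varphi$ is already a measurable coboundary), or $\limsup_{n}V^y(n)=+\infty$ and $\liminf_{n}V^y(n)=-\infty$, by Atkinson's recurrence theorem. Solving the orbit recursion $b_{n+1}=\varphi(g^ny)\,b_n-c$ with $b_n=(h\cdot(1-p))(g^ny)>0$ gives $b_n=e^{V^y(n)}\bigl(b_0-c\sum_{m=1}^{n}e^{-V^y(m)}\bigr)$, and recurrence makes $\sum_m e^{-V^y(m)}$ diverge, so positivity of the $b_n$ rules out $c>0$; the case $c<0$ is excluded by running the recursion backwards. (Equivalently, $c$ is the speed of the walk, $c=\int(2p-1)\,d\nu$, and the speed vanishes because the quenched walks are recurrent.) Thus an a.c.\ stationary measure exists if and only if $v\circ g-v=\log\varphi$ is solvable.

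For (ii) I would invoke Katok--Kononenko: since $f$ is accessible (its stable and unstable horocyclic foliations generate the dynamics) and accessibility is $\mathcal{C}^1$-open, every $g$ in a suitably small $N$ is accessible, and for accessible partially hyperbolic $g$ the equation $v\circ g-v=\log\varphi$ has a H\"older solution if and only if $F(\mathcal{C})(\log\varphi)=0$ for every \su\ loop $\mathcal{C}$. This gives both implications. In the ``if'' direction the H\"older solution $v$ provides the continuous density $h=e^{v}/(1-p)$; and if $p$, hence $\log\varphi$, is $\mathcal{C}^\infty$, the regularity bootstrap for cohomological equations over accessible partially hyperbolic systems upgrades $v$, and thus $h$, to $\mathcal{C}^\infty$, so $\nu$ is a smooth volume. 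In the ``only if'' direction the solution $v$ produced in (i) is a priori only measurable, so before evaluating the obstructions I would upgrade it to a H\"older solution by the measurable Livschitz-type rigidity valid for these systems, and then read off $F(\mathcal{C})(\log\varphi)=0$.

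The main obstacle I anticipate is task (i) — reversibility of an arbitrary a.c.\ stationary measure, uniformly over $N$. It is there that the symmetric hypothesis has to be turned into recurrence of the quenched walks, and the delicate point is that in the ``only if'' direction one knows only $h>0$ a.e., not that $h$ is bounded, so the positivity argument for the $b_n$ must tolerate an a priori uncontrolled, possibly unbounded density; passing from $f$, where $\mu$ is invariant and ergodic, to the perturbations $g$, and ensuring the obstruction functionals are unaffected by any failure of $\mu$ to be exactly $g$-invariant (which, if $N$ is not confined to volume-preserving maps, injects the log-Jacobian cocycle of $g$ into the cohomological equation), is a further burden. A secondary, more technical difficulty is verifying that the Katok--Kononenko machinery and its measurable and $\mathcal{C}^\infty$ companions, developed for accessible partially hyperbolic diffeomorphisms, apply cleanly to the time-one map of the geodesic flow and its perturbations, whose one-dimensional neutral center direction requires care.
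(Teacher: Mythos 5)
Your proposal is correct and its overall architecture coincides with the paper's: reduce the existence of an absolutely continuous stationary measure to the solvability of $\log\varphi=v\circ g-v$, then characterize solvability by the Katok--Kononenko periodic cycle functionals (using Wilkinson's theorem that for accessible perturbations of the time-one map a H\"older coboundary obstruction is exactly the vanishing of all $F(\mathcal{C})$, that measurable solutions upgrade to continuous ones, and that smooth data give smooth solutions). The difference is in how the first equivalence is obtained. The paper simply invokes the Conze--Guivarc'h theorem (Theorem \ref{teo.existestationary}): under ergodicity of $\mu$ and the symmetry condition, a stationary $\nu\sim\mu$ exists iff $\log\varphi$ is an integrable coboundary. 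You reprove this from scratch via the probability-current argument: stationarity plus $g$-invariance of $\mu$ forces the current $J(y)=p(y)h(y)-q(gy)h(gy)$ to be $g$-invariant, hence constant $c$ by ergodicity, and Atkinson's recurrence theorem (which is where $\int\log\varphi\,d\mu=0$ enters) makes $\sum_m e^{-V^y(m)}$ diverge, so positivity of $h$ kills $c\neq 0$ and yields detailed balance, i.e.\ the coboundary equation with $v=\log(hq)$. This is a valid and essentially self-contained substitute for the citation, and your identification $c=\int(2p-1)\,d\nu$ is a nice sanity check; what it costs you is that you must still justify, as the paper does via dynamical coherence, stable accessibility and the Burns--Wilkinson stable ergodicity theorem, that ergodicity of $\mu$ and accessibility persist for all conservative $g$ in the $\mathcal{C}^2$ neighborhood --- you assert this persistence rather than prove it, but it is exactly the input the paper assembles. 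Finally, the caveat you raise about non-volume-preserving $g$ is well taken: the paper's Theorem B (from which Theorem A is deduced) explicitly restricts to $g_{\ast}\mu=\mu$, so your reading of the hypothesis is the intended one.
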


This theorem will be deduced from a more general result (Theorem B) that, in pursuit of conciseness for this part, we will state later.

\smallskip

Before moving further let us contextualize our results in terms of the available literature. As we mentioned, the study these type of problems was initiated by Sinai in \cite{sinai1999simple}; in this work he considers the Random Walk generated by an irrational rotation, in the symmetric and non-symmetric case. For the symmetric case he imposes some regularity conditions, both in the rotation angle (requiring it to be Diophantine), and a requirement on the speed of decay of the Fourier coefficients of the map $p$. In this context Sinai is able to prove the existence of a stationary measure $\nu$ equivalent to Lebesgue by solving a functional equation for the density of $\nu$, by using previous own work in RWRE (of probabilistic nature). As the reader can guess, the conditions on $p$ and the rotation angle are to deal with small denominator type problems for solving the functional equation.

Building on Sinai's original ideas, in \cite{simpleanosov} Kaloshin and Sinai study the case when the dynamics is given by a (transitive) Anosov diffeomorphism. They impose a stronger regularity condition on $p$ ($\mathcal{C}^2$) and show by a probabilistic argument that \emph{typically} in the data $(\mu,p)$ there is no corresponding stationary measure equivalent to $\mu$. In particular, if $\mu$ is a Gibbs measure and the process defined by $p$ satisfies what is called the Functional Central Limit Theorem, then there is no absolutely continuous stationary measure equivalent to $\mu$.

A different approach based in ergodic theory was given \cite{Conze2000}; here Conze and Guivarc'h also consider the case of the circle obtaining comparable results as Sinai, assuming that $\log p$ has bounded variation but without Diophantine requirements on the angle. The authors also consider the case of an Anosov diffeomorphism by reducing the problem to the case of a shift of finite type and using some standard thermodynamic formalism, but nonetheless, their characterization is in terms of the solvability of a general functional (cohomological) equation (cf. \ref{teo.existestationary}), which in general is difficult to establish. 

Our approach relies on the one of Conze and Guivarc'h, and provides a characterizations of the solvability of the aforementioned cohomological equation in terms of the geometrical data available in the system. The cited previous work rely on a rigid structure of the dynamics driving the Random Walk: either translations in compact Abelian groups (where Fourier analysis is available), or systems having a well understood symbolic representation as Anosov diffeomorphism for which, as a result, one has very precise statistical information. By tying the geometry of the dynamics to the problem we can consider systems where the previous methods do not apply, particularly it seems that our approach is well suited for studying walks driven by partially hyperbolic systems (see \ref{def.ph}). As an illustration, Theorem B (page \pageref{teo.B}) likely can be extended to other partially hyperbolic systems where the solution of cohomological equations over the driving dynamics is understood, such as ergodic automorphisms of the torus \cite{Veech1986}.

Additionally, introducing a further aspect to the study of the problem is always desirable, as it can shed some light even in previous results. For example, in the setting of Anosov diffeomorphisms, a generic type behavior of non-existence of stationary measures  equivalent to a given Gibbs one can be deduced from the discussion in the Appendix. Of course, the results of Kaloshin and Sinai are much more complete: here we are just trying to illustrate a particular application.       

\smallskip

The organization of the rest of the article is as follows. In Section \ref{sec.RandomWalks} we discuss the relevant probability background for understanding the proof of our result; economy in the presentation is sought, however we include a discussion comparing this with the ergodic theory approach, for convenience of the more dynamical inclined reader. Then in Section \ref{sec:PartialHyperbolic} we introduce the necessary geometrical tools that we will use, and in particular discuss some basic aspects of Partially Hyperbolic Systems. In Section \ref{sec:proof} we present the proof of our main result, and then in the next section we give some applications to the dynamics of the Random Walk. We also include an Appendix indicating how to adapt our results to the completely hyperbolic case. 
{}

\section{Random Walks determined by dynamical systems}\label{sec.RandomWalks}

We start by a formal description of the Random Walk on $M$. Let $\Sigma:=\{f,f^{-1}\}^{\Z_+}$ considered as the topological product of discrete spaces: it is a compact metrizable space. Extend $p$ to $\tilde{p}:\{f,f^{-1}\}\times M\to (0,1)$ with 
\begin{align*}
    &\tilde{p}(f,x)=p(x)\\
    &\tilde{p}(f^{-1},x)=1-p(x).
\end{align*}
By using for example the Hahn-Kolmogorov extension theorem, one establishes for each $x\in M$ the existence of a unique probability measure $\mP_x$ on $\Sigma$ satisfying for every $N\geq 1$, for every cylinder 
\[
[a_1,\cdots,a_{N}]=\{(f_n)_{n\geq 1}\in \Sigma:f_1=a_1,\cdots,f_{N}=a_{N}\}
\]
the equality
\[
\mP_x([a_1,\cdots,a_{N}])=\tilde{p}(a_1,x)\cdot \tilde{p}(a_2,a_1x)\cdots \tilde{p}(a_{N},a_{N-1}\cdots a_1x).
\]
This way, $(S_n^x)_{n\in \Nat}$ is a Markov Chain on $(\Sigma,\mP_x)$, that represents all possible random walks starting from the point $x$ with the corresponding probabilities \ref{eq.probrwx}. Observe that there exist natural passages among walks corresponding to different points (from the walk $\alpha$ starting at $x$ to the walk $\sigma(\alpha)$ starting at $\alpha_1(x)$, where $\sigma:\Sigma\to \Sigma$ is the shift map); from the dynamical system point of view is natural then to consider a skew product construction $F:X=\Sigma\times M\to X$,
\begin{equation}\label{eq.skew}
F(\alpha,x)=(\sigma(\alpha),\alpha_1(x))
\end{equation}
that encodes the aforementioned passages. This approach however is not very useful in the context that we are considering (where the transition probabilities depend on the point), so we will present an alternative construction and indicate its relation with the skew-product construction later.

Observe that we can use the collection $\{\mP_x\}_x$ to induce a process on $M$, by defining the Markov operator $P_f=P:C(M)\to C(M)$
\begin{align*}
P\phi(x)&=\int_{\Sigma} \phi\circ S_1^x d\mP_x=p(x)\phi(fx)+(1-p(x))\phi(f^{-1}x)\\
\nonumber &= \int_{\{f,f^{-1}\}} \phi(\alpha(x))\tilde{p}(d\alpha,x).
\end{align*}
By direct computation, for $n\geq 1$
\[
P^n\phi(x)=\int_{\Sigma} \phi\circ S_n^x\cdot d\mP_x.
\]
As mentioned in the introduction, after determining the operator $P$ we can act by duality on $\mathcal{P}(M)$. It is worth bringing to the attention of the reader that $P$ (as any Markov operator) defines a probability kernel\footnote{The use of the same letter for both the Markov operator and its associated probability kernel is common practice.} $P:M\times\mathcal{B}_M\to [0,1]$ with
\[
    P(x,A)=P(\chi_A)(x),
\]
where $\chi_A$ denotes the characteristic function of $A\in\mathcal{B}_{M}$.

\smallskip

Denote by $\Omega:=M^{\Nat}$ equipped with its product topology, and let $\mathcal{B}_{\Omega}$ be its Borel ($=$ product) $\sigma$-algebra. For $n\in \Nat$ we write $X_n:\Omega \to M$ the projection
\[
    X_n(\omega)=\omega_n,
\] 
and let $\mathcal{B}^{(n)}_{\Omega}$ be the sub-$\sigma$-algebra of $\mathcal{B}_{\Omega}$ generated by $\{X_0,\ldots, X_n\}$. If $A\in\mathcal{B}_{M}$ and $n\in \Nat$ we write
\[
    [A]_n:=\{\omega\in \Omega: \omega_n\in A\}=X_n^{-1}(A).
\]

Before going any further let us recall the concept of conditional expectation and disintegration of measures. 

\subsection{Conditional expectation}  Given a probability space $(\Omega,\mathcal{B}_{\Omega},\mathbb{Q})$ and $\mathcal{B}'\subset \mathcal{B}_{\Omega}$ a sub-$\sigma$-algebra, there exists a positive linear operator $\mathbb{E}(\cdot |\mathcal{B}'):L^1(\Omega,\mathcal{B}_{\Omega},\mathbb{Q})\to L^1(\Omega,\mathcal{B}',\mathbb{Q})$ such that for $\psi \in L^1(\mathcal{B}_{\Omega})$, $\mathbb{E}(\psi |\mathcal{B}')$ is characterized by:
\begin{enumerate}
\item $\mathbb{E}(\psi |\mathcal{B}')$ is $\mathcal{B}'$ measurable.
\item $ \int_{A} \mathbb{E}(\psi |\mathcal{B}')d\mathbb{Q}=\int_A \psi d\mathbb{Q}$ for all $A\in \mathcal{B}'$. 
\end{enumerate}
It follows that for every $p\geq 1, \|\mathbb{E}(\psi |\mathcal{B}')\|_p\leq \|\psi \|_p$. When $\psi=\chi_A, A\in\mathcal{B}_{M}$ we write
\[
    \mathbb{Q}(A|\mathcal{B}')=\mathbb{E}(\chi_A |\mathcal{B}')
\]  
and call $\mathbb{Q}(A|\mathcal{B}')$ the conditional measure of $A$ relative to $\mathcal{B}'$.

\smallskip

We have the following (see for example \cite{Neveu} chapter V).
 
\begin{theorem}\label{teo.canonicalchain}
    Given $\mu\in \mathcal{P}(M)$ there exists a unique probability $\mathbb{Q}_{\mu}$ on $\mathcal{B}_{\Omega}$ such that
    \begin{itemize}
    \item $(X_0)_{\ast}\mathbb{Q}_{\mu}=\mu$.
    \item $A\in \mathcal{B}_{\Omega}, n\in\Nat \Rightarrow \mathbb{Q}_{\mu}([A]_{n+1}|\mathcal{B}^{(n)}_{\Omega})(w)=P(\omega_n,A)$.
    \end{itemize}
\end{theorem}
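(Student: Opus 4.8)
The plan is to construct $\mathbb{Q}_{\mu}$ by prescribing its finite-dimensional distributions and then to read off the two stated properties; this is the standard Ionescu--Tulcea/Kolmogorov existence argument for Markov chains (as carried out e.g.\ in \cite{Neveu}).

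First I would define, for each $n\in\Nat$ and Borel sets $A_0,\ldots,A_n\subseteq M$,
\[
Q_n(A_0\times\cdots\times A_n)=\int_{A_0}\mu(dx_0)\int_{A_1}P(x_0,dx_1)\cdots\int_{A_n}P(x_{n-1},dx_n).
\]
Because $P$ is a probability kernel, for fixed $A$ the map $x\mapsto P(x,A)$ is Borel, so each iterated integral is meaningful and, by Carath\'eodory, $Q_n$ extends to a Borel probability measure on $M^{n+1}$. Setting $A_n=M$ and using $P(x_{n-1},M)=1$ shows that the family $\{Q_n\}$ is consistent (each $Q_n$ is the pushforward of $Q_{n+1}$ under projection onto the first $n+1$ coordinates). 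Since $M$ is compact metric, hence Polish, Kolmogorov's extension theorem yields a unique probability $\mathbb{Q}_{\mu}$ on $\mathcal{B}_{\Omega}$ with $(X_0,\ldots,X_n)_{\ast}\mathbb{Q}_{\mu}=Q_n$ for all $n$. Taking $n=0$ gives $(X_0)_{\ast}\mathbb{Q}_{\mu}=\mu$, the first bullet.

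For the Markov property I would argue directly from the defining property of conditional expectation. Fix $A\in\mathcal{B}_M$ and $n\in\Nat$. The function $\omega\mapsto P(\omega_n,A)=(P\chi_A)(X_n(\omega))$ is $\mathcal{B}^{(n)}_{\Omega}$-measurable, so it only remains to check
\[
\int_B\chi_{[A]_{n+1}}\,d\mathbb{Q}_{\mu}=\int_B P(\omega_n,A)\,d\mathbb{Q}_{\mu}\qquad\text{for all }B\in\mathcal{B}^{(n)}_{\Omega}.
\]
Both sides are finite measures in the variable $B$, so by Dynkin's $\pi$-$\lambda$ theorem it suffices to verify the identity when $B=\{X_0\in A_0,\ldots,X_n\in A_n\}$ runs over the cylinders generating $\mathcal{B}^{(n)}_{\Omega}$. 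For such $B$ the left-hand side equals $Q_{n+1}(A_0\times\cdots\times A_n\times A)$ and the right-hand side equals $\int_{A_0\times\cdots\times A_n}(P\chi_A)(x_n)\,Q_n(dx_0\cdots dx_n)$; expanding $Q_{n+1}$ by its definition and using $\int_A P(x_n,dx_{n+1})=P(x_n,A)=(P\chi_A)(x_n)$ identifies the two expressions, which is precisely the second bullet. (The same monotone-class argument promotes the identity from $A$ in a generating $\pi$-system of $\mathcal{B}_M$ to all of $\mathcal{B}_M$, if one prefers to run the cylinder computation only for rectangles in both variables.)

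Uniqueness is then immediate: if $\mathbb{Q}$ satisfies both bullets, applying the second bullet with $B$ a cylinder and inducting on $n$ forces $(X_0,\ldots,X_n)_{\ast}\mathbb{Q}=Q_n$ for every $n$, so $\mathbb{Q}$ and $\mathbb{Q}_{\mu}$ agree on the $\pi$-system of cylinders and hence on $\mathcal{B}_{\Omega}$. The proof is soft; the only points needing attention are the measurability facts bundled into the phrase ``$P$ is a probability kernel'' (so that the $Q_n$ are well defined), the use of $M$ being Polish in Kolmogorov's theorem — which could be sidestepped by invoking the Ionescu--Tulcea theorem instead — and the reduction of the conditional-expectation identity to cylinders via $\pi$-$\lambda$. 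I do not expect a genuine obstacle; the only real care is in keeping the iterated-kernel bookkeeping consistent across the existence, Markov-property and uniqueness steps.
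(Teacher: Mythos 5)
Your proposal is correct, and it reproduces the standard Ionescu--Tulcea/Kolmogorov construction of the canonical Markov chain that the paper itself does not spell out but simply cites (Neveu, Chapter~V); the reduction of the conditional-expectation identity to generating cylinders via the $\pi$-$\lambda$ theorem and the inductive uniqueness argument are exactly the textbook route. One small remark: the statement in the paper writes $A\in\mathcal{B}_{\Omega}$ in the second bullet, which should read $A\in\mathcal{B}_{M}$ (as your proof correctly assumes); aside from silently correcting that typo there is nothing to fix.
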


\begin{example}\label{ex.deltax} 
In the case when $\mu=\delta_x, x\in M$ one obtains by induction that $\mathbb{Q}_{\delta_x}$ is supported on $\{\omega:\omega_0=x\}$ and 
\begin{align*}
    \mathbb{Q}_{\delta_x}&(\{\omega:\omega_0=x,\omega_1=\alpha_1(x),\omega_2=\alpha_2\alpha_1(x),\cdots, \omega_n=\alpha_n\cdots\alpha_1(x)\})\\
&=\mP_x([\alpha_1,\cdots,\alpha_n]).
\end{align*}
\end{example}    

\smallskip

Consider $T:\Omega\to \Omega$ the shift map. Using the uniqueness part of Theorem \ref{teo.canonicalchain} we obtain 
\[
    T_{\ast}\mathbb{Q}_{\mu}=\mathbb{Q}_{P^{\ast}\mu},
\]
and thus for every $P$ - stationary measure $\nu$ on $M$ we get a dynamical system $T:(\Omega,\mathbb{Q}_{\nu})\to (\Omega,\mathbb{Q}_{\nu})$.

\smallskip

\begin{definition}
The dynamical system $T:(\Omega,\mathbb{Q}_{\nu})\to (\Omega,\mathbb{Q}_{\nu})$ will be referred as the dynamical system associated to the random walk (defined by $P$ and the stationary measure $\nu$).
\end{definition}

Let us elucidate the relation between $T$ and the skew-product construction \eqref{eq.skew}. Define $\Phi:X\to \Omega$,
\begin{equation}\label{eq.PHI}
    \Phi(\alpha,x)=(x,\alpha_1(x),\alpha_2\alpha_1(x),\cdots).
\end{equation}
For $A_0,\cdots, A_n\in \mathcal{B}_M$ note that
\begin{align*}
&\Phi^{-1}(A_0\times A_1\times\cdots\times A_n\times M\times M\times\cdots)=\bigcap_{k=0}^n F^{-k}(\Sigma\times A_k)\\
&=\{(\alpha,x):x\in A_0, \alpha_1(x)\in A_1,\cdots, \alpha_n\alpha_{n-1}\cdots\alpha_1(x)\in A_n\}\in \mathcal{B}_X.
\end{align*}
Note that $\Phi$ is invertible, except when $\omega$ contains a point $x\in M$ satisfying $f^2x=x$. The set of periodic orbits for (perturbations of) the geodesic flow has zero Lebesgue measure, so if the stationary measure $\nu$ is equivalent to Lebesgue then we can ignore these periodic points. We define $\mathcal{B}_C:=\Phi^{-1}(\mathcal{B}_{\Omega})$, and use $\Phi$ to induce measures $m_C, m_x=\mP_x\in\mathcal{P}(X)$ such that $\Phi_{\ast}m_C=\mathbb{Q}_{\nu}, \Phi_{\ast}m_x=\mathbb{Q}_{\delta_x}$; in particular, the maps
\begin{align*}
\Phi: (X,\mathcal{B}_C,m_C)\to (\Omega,\mathcal{B}_{\Omega}, \mathbb{Q}_{\nu})\\
\Phi: (X,\mathcal{B}_C,\mP_x)\to (\Omega,\mathcal{B}_{\Omega}, \mathbb{Q}_{\delta_x})
\end{align*}
are measure-theoretic isomorphisms. 

Finally, note that $F:(X,\mathcal{B}_C)\to (X,\mathcal{B}_X)$ is measurable, and since $\Phi\circ F=T\circ \Phi$, we have that 
$F_{\ast}m_c=m_c$ and:

\begin{proposition}\label{pro.isomorskewchain} 
The map $\Phi: (X,\mathcal{B}_C,m_C)\to (\Omega,\mathcal{B}_{\Omega}, \mathbb{Q}_{\nu})$ is an conjugacy between $(F,m_c)$ and $(T,\mathbb{Q}_{\nu})$, in the sense that
\begin{enumerate}
\item $\Phi$ measure-theoretic isomorphism.
\item $\Phi\circ F=T\circ \Phi$.
\end{enumerate}
\end{proposition}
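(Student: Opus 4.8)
The plan is to establish the two claimed properties by unwinding the definitions; essentially all of the work has already been recorded in the discussion preceding the statement, so the proof mainly assembles it.

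First I would prove the intertwining identity (2) by a direct coordinate computation. Fixing $(\alpha,x)\in X$ and writing $(\sigma\alpha)_k=\alpha_{k+1}$, the $n$-th coordinate of $\Phi(F(\alpha,x))=\Phi(\sigma\alpha,\alpha_1 x)$ is $(\sigma\alpha)_n\cdots(\sigma\alpha)_1(\alpha_1 x)=\alpha_{n+1}\cdots\alpha_1 x$, while the $n$-th coordinate of $T(\Phi(\alpha,x))$ is the $(n+1)$-st coordinate of $\Phi(\alpha,x)$, again $\alpha_{n+1}\cdots\alpha_1 x$; these agree for every $n$, so $\Phi\circ F=T\circ\Phi$ holds on all of $X$ with no exceptional set.

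For (1), I would recall that $m_C$ was defined on $\mathcal{B}_C=\Phi^{-1}(\mathcal{B}_\Omega)$ precisely as the pull-back of $\mathbb{Q}_\nu$ along $\Phi$, so $\Phi_\ast m_C=\mathbb{Q}_\nu$ holds by construction, and it remains only to check that $\Phi$ is essentially a bijection with Borel inverse. Measurability of $\Phi$ is clear, each coordinate map $(\alpha,x)\mapsto\alpha_n\cdots\alpha_1 x$ being continuous. The image of $\Phi$ is the Borel set $\Omega_0:=\{\omega:\omega_{n+1}\in\{f\omega_n,f^{-1}\omega_n\}\text{ for all }n\}$, which carries full $\mathbb{Q}_\nu$-mass because the canonical chain of Theorem \ref{teo.canonicalchain} only moves by $f^{\pm1}$ (i.e.\ $P_f(y,\{fy,f^{-1}y\}^c)=0$ for every $y$). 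To invert $\Phi$ on $\Omega_0$ one reads off $x=\omega_0$ and, inductively, recovers the symbol $\alpha_{n+1}$ as the unique element of $\{f,f^{-1}\}$ carrying $\omega_n$ to $\omega_{n+1}$; this is possible exactly when the two candidates $f\omega_n,f^{-1}\omega_n$ are distinct, that is when $\omega_n\notin N:=\{y\in M:f^2 y=y\}$. Hence $\Phi$ restricts to a Borel isomorphism, with explicit (and therefore Borel) inverse, between $\{(\alpha,x):\alpha_n\cdots\alpha_1 x\notin N\text{ for all }n\}$ and $\{\omega\in\Omega_0:\omega_n\notin N\text{ for all }n\}$.

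The one point requiring a genuine argument, and the step I expect to be the crux, is that this last set has full $\mathbb{Q}_\nu$-measure. Here I would invoke stationarity: $(X_n)_\ast\mathbb{Q}_\nu=(P_f^\ast)^n\nu=\nu$ for every $n$, so $\mathbb{Q}_\nu([N]_n)=\nu(N)$; and since $\nu\sim\mu$ while $N$ is contained in the periodic set of the driving map, which is Lebesgue-null (as recalled earlier for the geodesic flow and its perturbations), we get $\nu(N)=0$, whence $\mathbb{Q}_\nu\bigl(\bigcup_{n\in\Nat}[N]_n\bigr)=0$ by countable subadditivity. To finish, I would note that $F$ maps $\mathcal{B}_C$ into itself, since $F^{-1}\mathcal{B}_C=(T\circ\Phi)^{-1}\mathcal{B}_\Omega=\Phi^{-1}(T^{-1}\mathcal{B}_\Omega)\subset\mathcal{B}_C$, and that $F_\ast m_C=m_C$, since $\Phi_\ast(F_\ast m_C)=(T\circ\Phi)_\ast m_C=T_\ast\mathbb{Q}_\nu=\mathbb{Q}_{P_f^\ast\nu}=\mathbb{Q}_\nu=\Phi_\ast m_C$ and $\Phi$ is essentially invertible. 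Apart from the measure-zero estimate $\mathbb{Q}_\nu(\bigcup_n[N]_n)=0$, where the hypotheses $\nu\sim\mu$ and the smallness of the periodic set are used, everything is a routine manipulation of the definitions.
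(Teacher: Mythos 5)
Your proof is correct and follows the same route the paper does; the paper does not give a separate proof block for this proposition but instead establishes it in the preceding discussion (defining $m_C$ as the pullback of $\mathbb{Q}_\nu$, noting $\Phi$ fails to be invertible only when some $\omega_n$ satisfies $f^2\omega_n=\omega_n$, and discarding this as a $\nu$-null and hence $\mathbb{Q}_\nu$-null set via $\nu\sim\mu$). Your write-up just makes explicit the stationarity argument $(X_n)_\ast\mathbb{Q}_\nu=\nu$ that the paper leaves implicit when it says the periodic points "can be ignored."
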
   

The Proposition above tell us we can use the skew-product \eqref{eq.skew} to study the dynamics of the random walks defined by $P$, but we have to use a different $\sigma$-algebra on $X$. Now we want to understand the relation between the measures $m_x$ and $m_C$. Let us recall the following construction.

\subsection{Disintegration of measures} Consider a Lebesgue probability space $(\Omega,\mathcal{B}_{\Omega},\mu)$;  that is, measure theoretic isomorphic to the unit interval equipped with its Lebesgue $\sigma$-algebra and a Lebesgue-Stieltjes measure. Let $\Omega/\mathcal{H}$ be the set of atoms, and $\pi: \Omega \to \Omega/\mathcal{H}$ the map that assigns to each $x$ the unique atom where is contained; $\pi$ is well defined for $\mu$ - almost every point. We equip $\Omega/\mathcal{H}$ with $\hat{\mathcal{B}}_{\Omega}$  the largest $\sigma$ - algebra making $\pi: (\Omega,\mathcal{B}_{\Omega})\to (\Omega/\mathcal{H},\hat{\mathcal{B}}_{\Omega})$ measurable, and let $\hat{\mu}=\pi_{\ast}\mu$.

\begin{definition}
 The partition $\mathcal{H}$ is said to be a measurable partition if $(\Omega/\mathcal{H},\hat{\mathcal{B}}_{\Omega},\hat{\mu})$
 is a Lebesgue space.
\end{definition}  

See \cite{rokhlin1967lectures} for more complete discussion of these topics, in particular for the proof of the following result.

 \begin{theorem}[Rohklin]
 Let $\mathcal{H}$ be a measurable partition of $(\Omega,\mathcal{B}_{\Omega},\mu)$. Then there exists a disintegration of $\mu$ relative to $\mathcal{H}$, i.e.\@ there exists a family $\{\mu^H\}_{H\in \Omega/\mathcal{H}}$, where each $\mu^H$ is a probability measure on $H$ satisfying: for any $A\in\mathcal{B}_{\Omega}$ the set $A\cap H$ is measurable with respect to the $\sigma$-algebra generated by $\mathcal{H}$, for $\hat{\mu}$-almost every $H\in\Omega/\mathcal{H}$. Moreover the function $H\to \mu^H(A\cap H)$ is measurable and
 \[
 \mu(A)=\int_{\Omega/\mathcal{H}} \mu^H(A\cap H)d\hat{\mu}(H).
 \]
\end{theorem}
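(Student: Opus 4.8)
The plan is to build the disintegration out of conditional expectations with respect to the sub-$\sigma$-algebra $\mathcal{B}':=\pi^{-1}(\hat{\mathcal{B}}_{\Omega})\subset\mathcal{B}_{\Omega}$, exploiting the fact that on a Lebesgue space every $\sigma$-algebra is countably generated modulo null sets. First I would fix a countable algebra $\mathcal{A}=\{A_n\}_{n\geq 1}\subset\mathcal{B}_{\Omega}$ generating $\mathcal{B}_{\Omega}$ mod $\mu$, and a sequence $\{B_k\}_{k\geq 1}$ generating $\mathcal{B}'$ mod $\mu$. For each $n$ the conditional expectation $\mathbb{E}(\chi_{A_n}\mid\mathcal{B}')$ is $\mathcal{B}'$-measurable, hence off a $\mu$-null set factors through $\pi$ as $g_{A_n}\circ\pi$ for some $\hat{\mathcal{B}}_{\Omega}$-measurable $g_{A_n}:\Omega/\mathcal{H}\to[0,1]$. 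Discarding a single $\hat\mu$-null set of atoms, one arranges simultaneously for all $n$ that $0\leq g_{A_n}(H)\leq 1$, that $H\mapsto g_{A_n}(H)$ is finitely additive and monotone along the algebra relations of $\mathcal{A}$, that $g_{\Omega}(H)=1$, and that $g_{B_k}(H)=\chi_{B_k}(H)$ (the last because $\chi_{B_k}$ is already $\mathcal{B}'$-measurable).

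Next I would promote, on this co-null set of atoms, the finitely additive set function $A_n\mapsto g_{A_n}(H)$ to a genuine Borel probability measure $\mu^H$ on $\Omega$. This is where the Lebesgue-space hypothesis enters decisively: using the defining property of a Lebesgue space one identifies $(\Omega,\mathcal{B}_{\Omega},\mu)$ modulo null sets with a Polish space carrying its Borel $\sigma$-algebra, so that countable additivity of the candidate set function can be verified by an inner-regularity/compactness argument (equivalently, one invokes Kolmogorov's extension theorem), producing a unique $\mu^H$ with $\mu^H(A_n)=g_{A_n}(H)$ for all $n$. Since $g_{B_k}(H)=\chi_{B_k}(H)\in\{0,1\}$, the measure $\mu^H$ is carried by $\bigcap_{k:\,H\in B_k} B_k\,\cap\,\bigcap_{k:\,H\notin B_k}(\Omega\setminus B_k)$, which coincides with the atom $H$ up to a $\mu^H$-null set because $\{B_k\}$ generates $\mathcal{B}'$ and hence separates atoms; thus $\mu^H$ is a probability measure living on $H$. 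Measurability of $H\mapsto\mu^H(A)$ holds by construction for $A\in\mathcal{A}$ and extends to all $A\in\mathcal{B}_{\Omega}$ by a monotone-class argument, while the identity
\[
\int_{\Omega/\mathcal{H}}\mu^H(A)\,d\hat\mu(H)=\int_{\Omega}\mathbb{E}(\chi_A\mid\mathcal{B}')\,d\mu=\mu(A)
\]
holds for $A\in\mathcal{A}$ by the defining property of conditional expectation, hence for all $A\in\mathcal{B}_{\Omega}$ by the same approximation. Uniqueness is then immediate: any other disintegration $\{\nu^H\}$ must also satisfy $H\mapsto\nu^H(A_n)=\mathbb{E}(\chi_{A_n}\mid\mathcal{B}')$ for $\hat\mu$-a.e.\ $H$ and every $n$, so $\mu^H$ and $\nu^H$ agree on the generating algebra $\mathcal{A}$ for $\hat\mu$-a.e.\ $H$, hence coincide.

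The main obstacle — and really the only delicate point — is precisely this promotion of the a.e.-defined conditional expectations $g_{A_n}$ to a bona fide family of measures $\{\mu^H\}$ attached to a fixed co-null set of atoms. The subtlety is that each $\mathbb{E}(\chi_{A_n}\mid\mathcal{B}')$ is defined only up to a $\mu$-null set, so a priori one obtains, for each $H$, merely a finitely additive set function on $\mathcal{A}$ whose countable additivity is not automatic; resolving this requires the full strength of the Lebesgue-space structure (countable generation modulo $0$ together with a Polish realization), and indeed the statement genuinely fails for general measure spaces. Everything else — the support property, measurability in $H$, the integral formula, and uniqueness — is then routine bookkeeping with monotone classes and the defining property of conditional expectation.
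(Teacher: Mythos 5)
The paper does not actually prove Rokhlin's theorem; it simply cites \cite{rokhlin1967lectures} for the proof, so there is no in-paper argument to compare yours against. That said, your sketch follows the classical route found in Rokhlin's own notes and in modern treatments: build the disintegration from the conditional expectations $\mathbb{E}(\chi_{A_n}\mid\mathcal{B}')$ along a countable generating algebra, discard a single $\hat\mu$-null set of atoms to restore finite additivity and normalization simultaneously, and use the Lebesgue-space structure to promote the resulting finitely additive set functions to genuine Borel probability measures. You have correctly isolated the one delicate step (promotion to $\sigma$-additivity) and correctly traced it to the Lebesgue-space hypothesis, and your uniqueness and monotone-class arguments are the standard ones. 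Two small imprecisions are worth flagging, though neither is a substantive gap. First, invoking Kolmogorov's extension theorem is not quite the right name for the tool: what is actually used is a Carath\'eodory or Lebesgue--Stieltjes construction on a Polish (or Cantor-set) realization, with compactness/inner regularity supplying $\sigma$-additivity on the generating algebra of clopen or interval sets. Second, the support claim $\mu^H(H)=1$ does not follow merely from ``$\{B_k\}$ generates $\mathcal{B}'$ and hence separates atoms''; one needs the nontrivial fact, which is part of what the measurable-partition hypothesis on a Lebesgue space delivers, that the $B_k$ can be chosen $\pi$-saturated with their joint atoms coinciding with the atoms of $\mathcal{H}$ for $\hat\mu$-a.e.\ $H$. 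Making that point explicit would close the only place where your sketch leans on the reader's goodwill.
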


It follows directly from the properties that disintegrations are essentially unique: if $\{\mu^H\}_{H\in \Omega/\mathcal{H}}, \{\tilde{\mu}^H\}_{H\in \Omega/\mathcal{H}}$ are disintegrations of $\mu$ relative to $\mathcal{H}$ then $\mu^H=\tilde{\mu}^{H}$ for $\hat{\mu}$ - almost every $H$.

\smallskip

In our case we have the following.

\begin{proposition}\label{pro.desintegra}
Let $\mathcal{H}=\left\{\Sigma\times\{x\}\right\}_{x\in M}$. Then $\{\mP_x\}_{x\in M}$ is the disintegration of $m_C$ in the partition $\mathcal{H}$, and the quotient measure on $X/\mathcal{H}\approx M$ is $\nu$.
\end{proposition}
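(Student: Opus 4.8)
The plan is to deduce the statement from the essential uniqueness of the Rohklin disintegration: it will suffice to check that the family $\{\mP_x\}_{x\in M}$, viewed via $\Phi$ as probability measures carried by the atoms $H_x:=\Sigma\times\{x\}$ of $\mathcal{H}$, satisfies the integral formula of Rohklin's theorem over the base measure $\nu$, and that $\nu$ is indeed the quotient measure. Observe first that, by Proposition \ref{pro.isomorskewchain}, $(X,\mathcal{B}_C,m_C)$ is measure-theoretically isomorphic to $(\Omega,\mathcal{B}_\Omega,\mathbb{Q}_\nu)$ with $\Omega=M^{\Nat}$ Polish, hence a Lebesgue space; and $X/\mathcal{H}$ is naturally identified with the compact metric space $M$, which together with a Borel probability is a Lebesgue space, so $\mathcal{H}$ is a measurable partition and the disintegration is unique up to $\nu$-null sets.

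The key point is the mixture identity
\[
\mathbb{Q}_\nu=\int_M \mathbb{Q}_{\delta_x}\, d\nu(x).
\]
To prove it, set $\widetilde{\mathbb{Q}}:=\int_M \mathbb{Q}_{\delta_x}\, d\nu(x)$, which is a well-defined element of $\mathcal{P}(\Omega)$ because $x\mapsto\mathbb{Q}_{\delta_x}(E)$ is measurable for every $E\in\mathcal{B}_\Omega$ (part of the construction behind Theorem \ref{teo.canonicalchain}). By the uniqueness in Theorem \ref{teo.canonicalchain}, it is enough to verify the two characterizing properties of $\mathbb{Q}_\nu$ for $\widetilde{\mathbb{Q}}$. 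The first is immediate: $(X_0)_*\widetilde{\mathbb{Q}}=\int_M (X_0)_*\mathbb{Q}_{\delta_x}\, d\nu(x)=\int_M \delta_x\, d\nu(x)=\nu$. For the Markov property, fix $A\in\mathcal{B}_\Omega$, $n\in\Nat$ and a test set $B\in\mathcal{B}^{(n)}_\Omega$; using the defining property of $\mathbb{E}(\cdot\,|\mathcal{B}^{(n)}_\Omega)$ for each $\mathbb{Q}_{\delta_x}$ and then integrating in $x$,
\begin{multline*}
\widetilde{\mathbb{Q}}\big(B\cap[A]_{n+1}\big)=\int_M \mathbb{Q}_{\delta_x}\big(B\cap[A]_{n+1}\big)\, d\nu(x)\\
=\int_M\Big(\int_B P(\omega_n,A)\, d\mathbb{Q}_{\delta_x}(\omega)\Big)\, d\nu(x)=\int_B P(\omega_n,A)\, d\widetilde{\mathbb{Q}}(\omega),
\end{multline*}
where the last equality is the definition of $\widetilde{\mathbb{Q}}$ applied to the bounded measurable function $\omega\mapsto\chi_B(\omega)P(\omega_n,A)$. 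Thus $\widetilde{\mathbb{Q}}([A]_{n+1}|\mathcal{B}^{(n)}_\Omega)(\omega)=P(\omega_n,A)$, and by uniqueness $\widetilde{\mathbb{Q}}=\mathbb{Q}_\nu$.

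It remains to transport this through $\Phi$. Since $\Phi$ is a measure-theoretic isomorphism sending $m_C$ to $\mathbb{Q}_\nu$ and $\mP_x$ to $\mathbb{Q}_{\delta_x}$ (Proposition \ref{pro.isomorskewchain} and the discussion preceding it), applying $(\Phi^{-1})_*$ to the mixture identity and using linearity of the pushforward gives
\[
m_C=\int_M \mP_x\, d\nu(x),
\]
where each $\mP_x$ is a probability measure carried by the atom $H_x=\Sigma\times\{x\}$, because $\mathbb{Q}_{\delta_x}$ is carried by $\{\omega_0=x\}$ and $\Phi^{-1}(\{\omega_0=x\})=H_x$ (Example \ref{ex.deltax}). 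Hence $m_C(A)=\int_M \mP_x(A\cap H_x)\, d\nu(x)$ for every $A\in\mathcal{B}_C$, which is exactly the disintegration formula. Finally the quotient map $\pi\colon X\to X/\mathcal{H}\approx M$ is the second-coordinate projection, and $\pi=X_0\circ\Phi$, so $\pi_*m_C=(X_0)_*\mathbb{Q}_\nu=\nu$; therefore $\nu$ is the quotient measure, and by essential uniqueness of disintegrations $\{\mP_x\}_{x\in M}$ is the disintegration of $m_C$ along $\mathcal{H}$.

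The step I expect to be the main obstacle is the verification of the Markov property for $\widetilde{\mathbb{Q}}$: one cannot simply push the $\nu$-integral inside a conditional expectation, so this has to be done by testing against the generating sets of $\mathcal{B}^{(n)}_\Omega$ and invoking the (Fubini-type) interchange of $\int_M(\,\cdot\,)\,d\nu$ with $\int_\Omega(\,\cdot\,)\,d\mathbb{Q}_{\delta_x}$, which in turn rests on the measurable dependence $x\mapsto\mathbb{Q}_{\delta_x}$ coming from the construction behind Theorem \ref{teo.canonicalchain}. Everything else is routine bookkeeping with pushforwards together with the explicit description of $\mathbb{Q}_{\delta_x}$ in Example \ref{ex.deltax}.
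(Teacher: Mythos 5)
Your proof is correct and follows essentially the same route as the paper: both reduce the claim to the mixture identity $\mathbb{Q}_\nu=\int_M \mathbb{Q}_{\delta_x}\,d\nu(x)$, derived from Theorem \ref{teo.canonicalchain}, and then observe that this identity together with the support property of $\mathbb{Q}_{\delta_x}$ (Example \ref{ex.deltax}) yields the disintegration, transported to $X$ via $\Phi$. The only difference is that you carry out in full the verification of the two characterizing properties of $\mathbb{Q}_\nu$ for the mixture $\widetilde{\mathbb{Q}}$ and the bookkeeping through $\Phi$, where the paper simply states the identity "follows by Theorem \ref{teo.canonicalchain}" and that the rest "is easily seen."
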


\begin{proof}
It suffices to observe that the disintegration of $\mathbb{Q}_{\nu}$ by the partition $\{\{\omega: \omega_0=x\}\}_{x\in M}$ is given by $\{\mathbb{Q}_{\delta_x}\}_{x\in M}$, with quotient measure $\nu$. This follows by Theorem \ref{teo.canonicalchain}, as it leads to $\mathbb{Q}_{\nu}=\int \mathbb{Q}_{\delta_x} d\nu(x)$, which is easily seen to imply the claim.
\end{proof}

\smallskip

\begin{example}
Assume that $p(x)\equiv p$ constant. Then $\mathbb{P}_x=\mP$ does not depend on $x$, and is given by the Bernoulli (product) measure $(p,1-p)^{\Z_+}$ on $\Sigma$. This implies that $m_C=\mP\times \nu$ is the product measure, and thus extends to the whole $\sigma$-algebra $\mathcal{B}_X$. The extension is $F$-invariant.

The resulting system is called a (locally constant) random dynamical system, and is by far much more studied than its $T$ counterpart. See for example \cite{Kifer}.
\end{example}

\subsection{Absolutely continuous stationary measures}\label{subsec:abscont}

We now focus our attention to the problem of the existence of $P$ - stationary measures with additional dynamical properties. Let us observe that by the form of the operator $P$  one readily verifies that if $\nu$ is $P$ - invariant, then in particular is $f$- quasi-invariant, namely $f_{\ast}\nu$ is absolutely continuous with respect to $\nu$. Now given a quasi-invariant measure $\nu$, $f_{\ast}^{-1}\nu=h\nu$, if $\nu$ is equivalent to $\mu$ (the $f$-invariant measure) with density $\rho$, then
\begin{align*}
&\nu=\rho\cdot \mu\Rightarrow f_{\ast}^{-1}\nu=f_{\ast}^{-1}(\rho\cdot \mu)=\rho\circ f\cdot \mu=h\nu=h\rho\mu\\
&\Rightarrow h=\frac{\rho\circ f}{\rho}
\end{align*}
and in particular $\log h=u\circ f-u$ is a coboundary for $f$. It is (part of) a result due to J.P. Conze and Y. Guivar'c that in the symmetric case, the fact of $\log \frac{p}{1-p}$ being a coboundary is equivalent to the existence of a $P$ - stationary measure $\nu$ equivalent to $\mu$ (assuming ergodicity of the later measure).

For the rest of this work we fix 
\begin{itemize}
    \item $\mu$ an $f$ - invariant ergodic measure.
    \item $p:M\to (0,1)$ continuous satisfying the following symmetry condition
    \[
    \int \log \varphi(x) d\mu(x)=0,
    \]
    where $\varphi(x):=\frac{p(x)}{q(x)}, q(x):=1-p(x)$.
\end{itemize}

The following Theorem is proved in \cite{Conze2000}.

\begin{theorem}\label{teo.existestationary}
In  the hypotheses above, there exists a $P$ - stationary measure $\nu$ equivalent to $\mu$ if and only if $\log\varphi=\log \frac{p}{1-p}$ is a (integrable) coboundary over $f$, i.e. there exists $\phi \in L^1(\mu), \phi>0$ such that
\[
\log\varphi=\phi\circ f -\phi. 
\]
\end{theorem}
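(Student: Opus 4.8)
The plan is to recast $P^{\ast}_f$-stationarity as a transfer-operator identity for the density of $\nu$ with respect to $\mu$, and then to concentrate the whole problem in the behaviour of a single scalar invariant — the \emph{probability current} of the walk — in the spirit of Conze and Guivarc'h. Write $\nu=\rho\,\mu$ with $\rho\geq 0$, $\int\rho\,d\mu=1$, and put $q=1-p$. Unravelling $P^{\ast}_f\nu=\nu$ gives the identity of measures $f_{\ast}(p\nu)+f^{-1}_{\ast}(q\nu)=\nu$, which, read in densities with respect to $\mu$ and using $f_{\ast}\mu=\mu$, becomes
\[
(p\circ f^{-1})(\rho\circ f^{-1})+(q\circ f)(\rho\circ f)=\rho\qquad\mu\text{-a.e.}
\]
Set $J:=p\rho-(q\circ f)(\rho\circ f)$. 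A direct manipulation shows the displayed identity is equivalent to $J\circ f^{-1}=J$, so by ergodicity of $\mu$ there is a constant $J_0$ with $J=J_0$ a.e. If $J_0=0$ then $p\rho=(q\circ f)(\rho\circ f)$; the zero set of $\rho$ is then $f$-invariant, hence $\rho>0$ a.e., and taking logarithms together with the elementary identity $\log p-\log(q\circ f)=\log\varphi-\big((\log q)\circ f-\log q\big)$ yields $\phi\circ f-\phi=\log\varphi$ for $\phi:=\log(\rho q)$. Conversely, for $\phi$ with $\phi\circ f-\phi=\log\varphi$ the function $\rho:=e^{\phi}/q$ satisfies $q(fx)\rho(fx)=e^{\phi(fx)}=e^{\phi(x)}\varphi(x)=p(x)\rho(x)$, and adding this to its $f^{-1}$-pullback recovers the displayed identity. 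So the theorem reduces to two statements: (a) if $\log\varphi$ is a coboundary as above, the associated $\rho$ gives a $P^{\ast}_f$-stationary $\nu\sim\mu$; (b) in the symmetric case, every integrable density solving the displayed identity has $J_0=0$.

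\emph{Sufficiency, statement (a).} Given $\log\varphi=\phi\circ f-\phi$ with $\phi\in L^1(\mu)$, put $\nu:=Z^{-1}(e^{\phi}/q)\,\mu$. Its density is strictly positive and $\mu$-integrable (since $q$ is bounded away from $0$ on the compact $M$), so $\nu$ is a probability measure equivalent to $\mu$; by the identity $p\rho=(q\circ f)(\rho\circ f)$ above it solves the transfer identity, i.e.\ $P^{\ast}_f\nu=\nu$.

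\emph{Necessity, statement (b), and the main obstacle.} Let $\nu=\rho\mu$ be $P^{\ast}_f$-stationary with $\rho\in L^1(\mu)$, $\rho>0$ a.e., and suppose toward a contradiction $J_0\neq 0$; say $J_0>0$ (the case $J_0<0$ is identical after exchanging $f\leftrightarrow f^{-1}$ and $p\leftrightarrow q$). Then $p\rho=(q\circ f)(\rho\circ f)+J_0>J_0$, so $\rho$ is bounded below by a positive constant, and along a $\mu$-generic orbit the positive numbers $r_n:=\rho(f^n x)$ obey the linear recursion $q(f^{n+1}x)\,r_{n+1}=p(f^n x)\,r_n-J_0$. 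Solving this explicitly and imposing $r_n>0$ for every $n\geq 0$ forces
\[
\rho(x)=r_0\ \geq\ \frac{J_0}{p(x)}\sum_{j\geq 0}\exp\!\Big(-\sum_{k=1}^{j}\log\varphi(f^k x)\Big)\qquad\mu\text{-a.e.}
\]
Here symmetry is decisive: $\log\varphi$ is bounded, has zero $\mu$-mean, and $f$ is ergodic, so by Atkinson's recurrence theorem $\liminf_j\big|\sum_{k=1}^{j}\log\varphi(f^kx)\big|=0$ for $\mu$-a.e.\ $x$; hence the series above diverges, $\rho(x)=\infty$ a.e., contradicting $\rho\in L^1(\mu)$. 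Therefore $J_0=0$, and then $\phi:=\log(\rho q)$ is a transfer function for $\log\varphi$, lying in $L^1(\mu)$ — its positive part being dominated by $\log^{+}\rho\leq\rho$.

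The one substantial point is thus the vanishing of the current $J_0$. It genuinely fails without symmetry: when $\int\log\varphi\,d\mu\neq 0$ the sums $\sum_{k\leq j}\log\varphi(f^kx)$ drift to $+\infty$ in one time direction, the series above converges, and one obtains an absolutely continuous stationary measure with $J_0\neq 0$ (a non-reversible one). All the rigidity of the symmetric case is therefore carried by the recurrence input (Atkinson's theorem, equivalently recurrence of the associated random walk); once this is in place the cohomological bookkeeping and the construction of $\nu$ are routine. A minor technical care is needed only in controlling the negative part of $\log\rho$, for which one uses that $\log\varphi$ is bounded and the cocycle structure just displayed.
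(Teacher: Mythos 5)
The paper does not actually prove this theorem: it cites Conze and Guivarc'h and only sketches the equivalence of two formulations. Your proposal supplies an independent argument, and its central device is genuinely nice: defining the probability current $J := p\rho - (q\circ f)(\rho\circ f)$, checking that $P^{\ast}_f$-stationarity of $\rho\mu$ is equivalent to $J\circ f^{-1}=J$, and invoking ergodicity to reduce the whole problem to a single scalar $J_0$. The use of Atkinson's recurrence theorem to force $J_0=0$ in the symmetric case is exactly the right input, and it pinpoints cleanly where the hypothesis $\int\log\varphi\,d\mu=0$ enters. At that conceptual level the argument is correct, and is more explicit than anything in the paper.

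However, the integrability bookkeeping is not closed, and it is not minor. In the sufficiency direction you set $\rho := e^{\phi}/q$ and assert it is $\mu$-integrable ``since $q$ is bounded away from $0$'' --- but that remark controls only the factor $1/q$; nothing in the hypothesis $\phi\in L^1(\mu)$ forces $e^{\phi}\in L^1(\mu)$, and the cohomological equation does not rescue this, since a transfer function can be unbounded above on a positive-measure set. In the necessity direction you need $\log\rho\in L^1(\mu)$, you correctly observe $\log^{+}\rho\le\rho\in L^1$, but you dismiss the negative part as a ``minor technical care'' with a gesture at the cocycle structure; $\rho\in L^1$ with $\rho>0$ a.e.\ does \emph{not} imply $\log\rho\in L^1$, and no argument is given. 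Both gaps are the same phenomenon --- exchanging $L^1$ control of $\rho$ for $L^1$ control of $\log\rho$ --- and both require a real argument. To be fair, the paper's own phrasing ``$\phi\in L^1(\mu),\ \phi>0$'' is already imprecise (its subsequent remark shows that what Conze--Guivarc'h actually require is a \emph{measurable} $\psi$ with $p/(q\circ f)=(\psi\circ f)/\psi$ that is moreover an integrable density); your proposal inherits that imprecision rather than resolving it. If you restate the coboundary condition as ``there exists measurable $\psi>0$ with $\psi\in L^1(\mu)$ and $\log\varphi=\log(\psi q)\circ f-\log(\psi q)$'', your current-based argument goes through cleanly in both directions and the gaps disappear.
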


In \cite{Conze2000} the result is stated changing `$\log\varphi=\log \frac{p}{1-p}$ is coboundary over $f$' by `there exists $\psi$ measurable such that $\frac{p}{q\circ f}=\frac{\psi\circ f}{\psi}$', but it is easy to see that these are equivalent in our case. Indeed, if 
\begin{equation}\label{eq.densidadCG}
\frac{p}{q\circ f}=\frac{\pi\circ f}{\pi}
\end{equation}
then 
\[
\frac{p}{q}=\frac{(\pi\cdot q)\circ f}{\pi\cdot q},
\]
whereas if 
\begin{equation}
\frac{p}{q}=\frac{\psi\circ f}{\psi}
\end{equation}
then 
\[
\frac{p}{q\circ f}=\frac{(\frac{\psi}{q})\circ f}{\frac{\psi}{q}}.
\]
It is also worth pointing out that if \eqref{eq.densidadCG} holds, then $\nu=\pi\mu$ is $P$ - invariant.

Concerning the uniqueness of the $P$ - stationary measure $\nu$ equivalent to $\mu$, it is direct consequence of Hopf's ergodic theorem. See Proposition $2.3$ in \cite{Conze2000} for details.

\section{Partial Hyperbolicity and Invariant Foliations}\label{sec:PartialHyperbolic}

As in the introduction, consider an hyperbolic closed manifold $S$ and let 
$M=T^1S$, $f:M\to M$ the time-one map of the geodesic flow. This map is an example of a \emph{partially hyperbolic diffeomorphism}, whose definition is recalled below.

\smallskip 

\noindent\textbf{Convention:} all manifolds considered are connected and second countable. By a submanifold we mean an immersed submanifold. If $M, N$ are submanifolds then $\mathrm{Emb}^r(M,N)$ denotes the set of embeddings of differentiability class $\mathcal{C}^r$ from $M$ to $N$.
\smallskip

\begin{definition}\label{def.ph}
 A diffeomorphism $f: M \to M$ of a compact manifold is partially hyperbolic if there exists a continuous splitting of the tangent bundle $TM = E^s \oplus E^c \oplus E^u$ and a Riemannian metric on $M$ such that for every $x\in M$, for every unit vector $v^{\ast}\in E^{\ast}(x), \ast=s,c,u$ it holds
 \[
 \|Df_x(v^s)\|<\min\{1,\|Df_x(v^c)\|\}, \max\{\|Df_x(v^c)\|,1\}<\|Df_x(v^u)\|.
 \]
\end{definition}

\smallskip

The bundles $E^s, E^c$ and $E^u$ are referred to as the stable, center and unstable bundles for $f$ respectively. Partial hyperbolic is a $C^1$-open condition, that is, there is an open neighborhood $C^1$ of $f$ where any $g$ partially hyperbolic diffeomorphism in that neighborhood is also partially hyperbolic. For more details and a introduction to Partial Hyperbolicity see \cite{pesin2004lectures}. 

Let us recall also the concept of foliation.

\smallskip

\begin{definition}
Let $M$ be a manifold of dimension $m$ and $\mathcal{W}=\{W(x)\}_{x\in M}$ a partition into $\mathcal{C}^r$ submanifolds of the same dimension $d$. We say that $\mathcal{F}$ is a foliation if there exists an open covering $\mathcal{U}=\{U\}$ of $M$, and for each $U\in\mathcal{U}$ a continuous function $\phi_U: (-1,1)^{m-d}\to \mathrm{Emb}^r\left((-1,1)^d,M\right)$ satisfying the following
\begin{enumerate}
\item If $x\in U$ then  there exists  unique $v\in (-1,1)^{m-d}$ and $h\in (-1,1)^d$ such that $\phi_U(v)(h)=x$. Moreover, the image of $\phi_U(v)$ is the connected component of $W(x)\cap U$ containing $x$.
\item If $U\cap U'$  then the change of coordinates map $\phi_{U',U}: (-1,1)^{m-d}\to (-1,1)^{m-d}$, $\phi_U(v)=v'$ if and only if $\phi_U(v)(h)=\phi_U(v)(h')$ is continuous.
\end{enumerate}
Moreover, if the change of coordinates above is differentiable we say that $\mathcal{W}$ is a differentiable foliation. The atoms $W(x)$ are the leaves of the foliation $\mathcal{W}$.  
\end{definition}

\begin{figure}[h]
    \centering
    \includegraphics[width=0.7\linewidth]{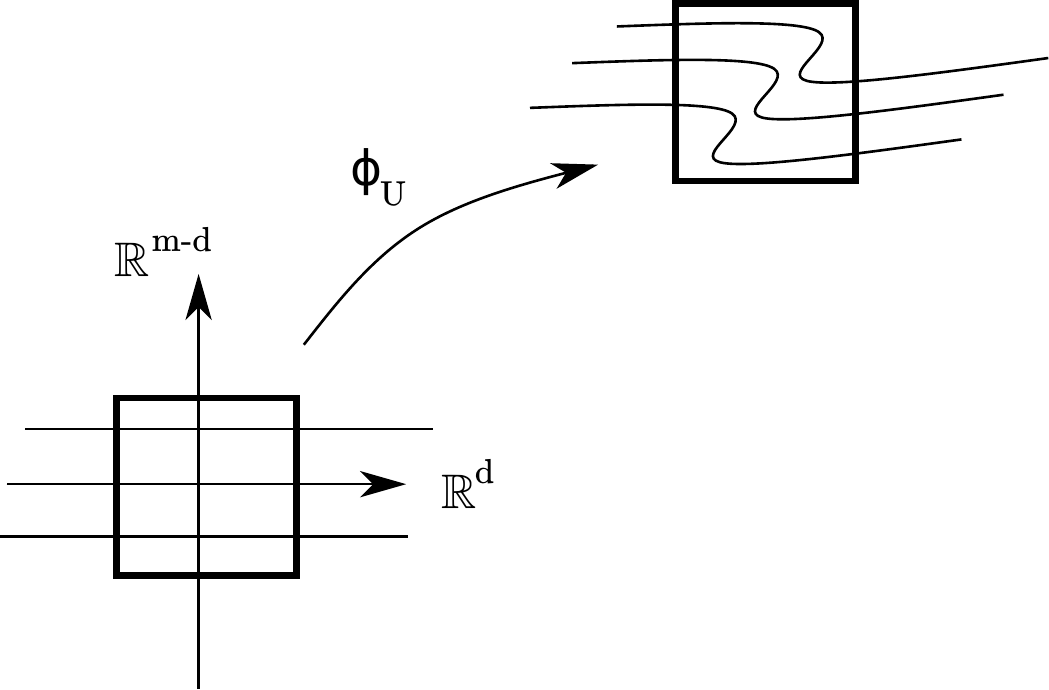}
     \label{fig:foliation}
\end{figure}

\begin{remark}
To be precise, our definition is what in the literature is known as a $\mathcal{C}^{0+,r}$ foliation. The reader can consult \cite{foliationsI} for more information and a introduction to the topic. 
\end{remark}

\smallskip

If $\mathcal{W}$ is a foliation then $T\mathcal{W}=\bigsqcup_{x} TW(x)$ is a sub-bundle of $TM$ that is (by definition) tangent to the leaves. Conversely:

\begin{definition}
A sub-bundle $E\subset TM$ is said to be integrable if there exists a foliation $\mathcal{W}$ on $M$ such that $T\mathcal{W}=E$.
\end{definition}

We now return to the Partially Hyperbolic context. By the classical Stable Manifold Theorem of Hadamard-Perron (cf. \cite{HPS}) both $E^s,E^u$ are integrable to $f$ - invariant foliations $\mathcal{W}^s, \mathcal{W}^u$; here, invariance means that $f$ permutes the leaves of $\mathcal{W}^{\ast}$ for $\ast=s,u$. In addition, if $d_{W^{\ast}(x)}$ denotes the intrinsic distance induced by the Riemannian metric in the leaf $W^{\ast}(x)$, then it is direct to verify that for some $\lambda<1$ it holds
\begin{align}
&\label{eq.contra1} y,y'\in W^s(x)\Rightarrow  d_{W^s(f^nx)}(f^ny,f^ny')\leq \lambda^n d_{W^s(x)}(y,y')\quad \forall n\geq 0\\ 
&\label{eq.contra2} z,z'\in W^u(x)\Rightarrow  d_{W^u(f^{-n}x)}(f^{-n}z,f^{-n}y')\leq \lambda^n d_{W^s(x)}(z,z')\quad \forall n\geq 0.
\end{align} 

\smallskip

On the other hand, the bundle $E^c$ is not always integrable (not even if $\dim E^c=1$, \cite{nondyncoh}). We say that a partially hyperbolic diffeomorphism $f$ is \emph{dynamically coherent} if the sub-bundles $E^{cs}:= E^s \oplus E^c$ and $E^{cu}:= E^c \oplus E^u$ integrable to $f$ - invariant foliations $\mathcal{W}^{cs}$ and $\mathcal{W}^{cu}$.  If $f$ is dynamically coherent, then $E^c$ is tangent to the foliation $\mathcal{W}^c := \mathcal{W}^{cs} \cap \mathcal{W}^{cu}$ obtained intersecting the leaves of $\mathcal{W}^{cs}$ and $\mathcal{W}^{cu}$.

Note the the time-one map of the geodesic flow referred before is dynamically coherent, with one dimensional center tangent to the flow lines. Not only that, its center foliation is in fact differentiable. This is seldom the case; in fact if a small volume preserving perturbation of $f$ has differentiable center, then in particular it is the time-one map of a conservative (hyperbolic) flow, which imposes a very strong constrain on the existence of such systems. See \cite{Avila2015}.

Nonetheless, small $\mathcal{C}^1$ perturbations of the geodesic flow are dynamically coherent. See the proof of Theorem B below.

\smallskip

A partially hyperbolic diffeomorphism $f$ is said to be \textit{accessible} if every pair $x,y \in M$ can be connected by a $\su$ path, that is, a path composed of segments that always lie in $\mathcal{W}^s$ or $\mathcal{W}^u.$ We say $f$ is $\mathcal{C}^r$ - \textit{stably accessible} if every $g$ sufficiently $C^r$ - close to $f$ is accessible. It was originally proved by Katok and Kononenko \cite{Katok1996} that the time-one map of a geodesic flow corresponding to a hyperbolic manifold is $\mathcal{C}^r$ - stably accessible.

\smallskip

We end this section by noting that if $\mathcal{C}$ is an $\su$ loop corresponding to a partially hyperbolic map $f$ (cf. Definition \ref{def:suloop}) then the functionals $F(\mathcal{C};x_i\to x_{i+1})$ appearing in the definition o{}f $F(\mathcal{C})$ are well defined on the space of H\"older continuous functions, in virtue of \eqref{eq.contra1} \eqref{eq.contra2}.

\section{Proof of the Main Result}\label{sec:proof}

In this part we put everything together and establish a result that will imply the Main Theorem. Let us fix $M$ a compact manifold and let us also fix $\mu$ a smooth volume on $M$. The set $\mbox{Diff}^r(M)$ consists of the $\mathcal{C}^r$ diffeomorphisms of $M$.

\smallskip

\begin{thmB}\label{teo.B}
Suppose that $f:M\to M$ is a $\mathcal{C}^{\infty}$ accessible partially hyperbolic diffeomorphism satisfying
\begin{enumerate}
\item $\dim E^c=1$.
\item For some Riemannian metric $Df|E^c$ is an isometry.
\item $f$ preserves the measure $\mu$.
\end{enumerate}
Let $p:M\to (0,1)$ be a H\"older continuous function, $q=1-p$, $\varphi=\frac{p}{q}$ and assume that $\int \log \varphi \cdot d\mu=0$.
Then there exists $N$ a $\mathcal{C}^2$ open neighborhood of $f$ such that if $g\in N, g_{\ast}\mu=\mu$ then the Random Walk on $M$ defined by $(g,p)$ has a $P$ - stationary measure equivalent to $\mu$ if and only if for every $\su$ loop the functional associated to $\mathcal{C}$ vanishes on $\log\varphi$. 

Moreover, the density of the $P$ - stationary measure is continuous. If furthermore $p$ is differentiable, then the $P$ - stationary measure is a smooth volume on $M$.
\end{thmB}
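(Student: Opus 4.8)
The plan is to reduce the problem to Theorem \ref{teo.existestationary}: a $P$-stationary measure equivalent to $\mu$ exists if and only if $\log\varphi$ is an integrable coboundary over $g$, i.e. $\log\varphi = \phi\circ g - \phi$ for some $\phi\in L^1(\mu)$, $\phi>0$. So the whole theorem becomes the assertion that, for $g$ in a $\mathcal{C}^2$ neighborhood $N$ of $f$, the function $\log\varphi$ is a coboundary over $g$ if and only if $F(\mathcal{C})(\log\varphi)=0$ for every \su\ loop $\mathcal{C}$, together with the regularity statements on the transfer function $\phi$ (which then yields the density $\rho = e^{-\phi}$ up to normalization, via the computation in \S\ref{subsec:abscont}: if $\log\varphi=\phi\circ g-\phi$ then $h=\rho\circ g/\rho$ with $\rho=e^{-\phi}$, so $\nu=\rho\mu$ is the stationary measure). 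The forward direction (coboundary $\Rightarrow$ loop functionals vanish) is the easy telescoping computation: along a stable leg $x_i\to x_{i+1}$, $\sum_{n\ge 0}(\log\varphi(g^nx_i)-\log\varphi(g^nx_{i+1}))$ telescopes using $\log\varphi=\phi\circ g-\phi$ to $\phi(x_i)-\phi(x_{i+1})$ provided $\phi$ is continuous (or at least has a continuous representative along leaves), and similarly with sign flip on unstable legs, so $F(\mathcal{C})(\log\varphi) = \sum_i(\phi(x_i)-\phi(x_{i+1}))=0$ around a loop.

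The substantive direction is the converse: assuming all \su\ loop functionals vanish, construct a continuous $\phi$ with $\log\varphi=\phi\circ g-\phi$. This is precisely the Katok–Kononenko machinery from \cite{Katok1996}: first I would verify that $g\in N$ is accessible and dynamically coherent with one-dimensional center on which $Dg$ is an isometry — this uses that partial hyperbolicity, dynamical coherence, stable accessibility, and the isometric-center property are all $\mathcal{C}^1$- (resp. $\mathcal{C}^2$-) robust for the time-one map of a geodesic flow; the dynamical coherence and the persistence of the center foliation is where one invokes the normal-hyperbolicity theory of \cite{HPS} and the structural stability of the geodesic flow, shrinking $N$ as needed. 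Granted that, I would define $\phi$ by fixing a base point $x_0$, setting $\phi(x_0)=0$, and for any $x$ choosing an \su\ path from $x_0$ to $x$ and declaring $\phi(x)$ to be $\sum$ of the leg-functionals $F(\cdot;x_j\to x_{j+1})(\log\varphi)$ along that path (the sums converging absolutely by Hölder continuity of $\log\varphi$ and the contraction/expansion estimates \eqref{eq.contra1}, \eqref{eq.contra2}). Vanishing on loops makes this well-defined independent of the chosen path; the cocycle identity along a single stable or unstable leg shows $\log\varphi = \phi\circ g - \phi$; and the local product structure of the $\mathcal{W}^s$, $\mathcal{W}^u$ foliations plus uniform convergence of the defining series on compact families of local \su\ paths gives continuity of $\phi$. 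For the smooth case, when $p$ is $\mathcal{C}^1$ one upgrades: $\phi$ is $\mathcal{C}^1$ (indeed as smooth as $p$ allows) along stable and unstable leaves with uniformly continuous leafwise derivatives, and then a Journé-type regularity lemma (a function smooth along two transverse foliations with smooth holonomies is smooth) together with the isometric center — which lets one also control the center direction, since along center leaves $g$ is an isometry and one can differentiate the defining series — promotes $\phi$ to a $\mathcal{C}^1$ (hence, with the volume $\mu$, smooth-density-giving) function on $M$, so $\nu=e^{-\phi}\mu$ is a smooth volume.

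The main obstacle I expect is twofold. First, establishing that the perturbations $g\in N$ retain \emph{all} the structure needed — in particular dynamical coherence with a one-dimensional center carrying an isometric $Dg$, which is a nongeneric condition and forces one to work within a codimension-whatever submanifold of $\mathrm{Diff}^2(M)$ or to argue that the relevant consequences (existence of center-stable and center-unstable foliations with good holonomies, and the ability to close up \su\ paths) survive even if the strict isometry does not; the excerpt's phrasing "$g_{\ast}\mu=\mu$" and the appeal to \cite{Avila2015} suggest the authors restrict to conservative $g$ and accept that the center need not be smooth for $g$, so the regularity argument must be robust to that. Second, the regularity bootstrap for $\phi$: getting from "continuous, and leafwise-smooth along $\mathcal{W}^s,\mathcal{W}^u$" to "smooth on $M$" requires the stable and unstable holonomies to be smooth enough (at least uniformly $\mathcal{C}^1$, or $\mathcal{C}^{1+}$) for a Journé-type lemma to apply, which in turn needs the bunching/$\mathcal{C}^1$-section estimates for $g$ — this is the technically heaviest point, and it is also where the $\mathcal{C}^2$ (rather than $\mathcal{C}^1$) size of the neighborhood $N$ is presumably consumed, to guarantee enough regularity of the invariant foliations and their holonomies.
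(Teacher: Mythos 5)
Your overall route is the same as the paper's: reduce via Theorem \ref{teo.existestationary} to the solvability of $\log\varphi=\phi\circ g-\phi$, then characterize solvability through the Katok--Kononenko \su-loop functionals and upgrade regularity by a Journ\'e-type argument. The main structural difference is that you attempt to re-derive the Katok--Kononenko construction (defining $\phi$ along \su\ paths, showing path-independence from loop-vanishing, extracting continuity from the local product structure) whereas the paper invokes Theorem A and Corollary 0.2 of \cite{wilkinson2013cohomological} as black boxes. Your re-derivation is essentially the content of those references, so it is not wrong, just heavier.

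There are two genuine gaps, though. First, and most importantly, you never establish that $\mu$ is \emph{ergodic} for $g$. Theorem \ref{teo.existestationary} (Conze--Guivarc'h) is stated under the hypothesis that $\mu$ is an $f$-invariant \emph{ergodic} measure, and without ergodicity the whole reduction to a coboundary equation collapses. The paper's entire opening paragraph is devoted to showing $f$ is $\mathcal{C}^2$-stably ergodic: dynamical coherence persists (\cite{PartSurv}, \cite{HPS}), accessibility is $\mathcal{C}^r$-stable (\cite{hertz2008accessibility}), and then stable ergodicity follows from the Pugh--Shub--Burns--Wilkinson theorem (\cite{StableJulienne}), using that center bunching is automatic for perturbations of a map acting isometrically on $E^c$. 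You verify accessibility and dynamical coherence for $g$ but never draw the conclusion that $\mu$ is $g$-ergodic, which is the whole point of assembling those ingredients; an accessible partially hyperbolic volume-preserving map need not be ergodic without the bunching input. Second, your forward implication (stationary measure $\Rightarrow$ loop functionals vanish) runs through a telescoping argument that requires the transfer function $\phi$ to be \emph{continuous}, but Theorem \ref{teo.existestationary} only produces $\phi\in L^1(\mu)$. You flag this parenthetically but do not resolve it. The paper resolves it by citing, as part of the Wilkinson/Katok--Kononenko theory, the statement that existence of a \emph{measurable} solution to the cohomological equation already implies existence of a \emph{continuous} one; this Livsic-type measurable-rigidity statement is precisely what lets one telescope, and without it your forward direction is incomplete. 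Finally, a remark on the "nongeneric isometric center" worry you raise: the paper sidesteps it exactly as you conjecture, by observing that only center bunching (an open condition) is needed downstream, not strict isometry of $Dg|E^c$, so perturbations $g$ do not need to retain the isometry.
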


\begin{proof}
We start by noting that $f$ is $\mathcal{C}^2$ - stably-ergodic; meaning that there exists a $\mathcal{C}^2$ neighborhood $N$ of $f$ such that every $g\in N$ preserving $\mu$ is (partially hyperbolic and) ergodic (in particular, $\mu$ is an ergodic measure for $f$). One way to obtain this is as follows: due to Corollary $7.6$ in \cite{PartSurv}, the map $f$ is dynamically coherent, and used in conjunction with Theorem $7.4$ in \cite{HPS}, we obtain the same is also valid for $\mathcal{C}^1$ small perturbations. It is proven in \cite{hertz2008accessibility} that $f$ is also $\mathcal{C}^r$ stable accessible, for any $r\geq 2$. The stably ergodicity is then consequence of Theorem A in \cite{StableJulienne} (we remark that the technical condition of \emph{center bunching} in the hypotheses of that Theorem are immediate for perturbations of partially hyperbolic maps that act as an isometry on their centers).

It follows that there exists $N$ a $\mathcal{C}^2$ neighborhood of $f$ such that if $g\in N$ and $g_{\ast}\mu=\mu$ then
\begin{itemize}
\item $\mu$ is an ergodic measure for $g$.
\item $g$ is accessible.
\end{itemize}

Let us fix $g\in N$ preserving $\mu$. On the one hand, by Theorem \ref{teo.existestationary}, there exists a $P$ - stationary measure $\nu$ equivalent to $\mu$ if and only if there exists a integrable solution of the cohomological equation 
\[
    \log \varphi=\phi \circ g - \phi.
\]
On the other hand, since $\varphi$ is H\"older (and positive away from zero), then its logarithm is also H\"older and thus by Theorem A of \cite{wilkinson2013cohomological} (see also \cite{Katok1996}) we have:
\begin{enumerate}
\item the existence of a continuous solution to the previous equation is equivalent to the vanishing on $\log \varphi$ of any functional $F_g(\mathcal{C})$ associated to an $\su$ loop.
\item The existence of a measurable solution equation implies the existence of a continuous solution.
\end{enumerate}
From here follows the first part. 

Assuming further that $p$ is differentiable, we obtain by Corollary 0.2 of \cite{wilkinson2013cohomological} that solutions of the cohomological equation are automatically smooth, and since the density of the stationary measure is precisely $\phi$ (cf. remark after Theorem \ref{teo.existestationary}), we deduce that $\nu$ is a smooth volume. This finishes the proof. 
\end{proof}

\smallskip

We bring to the attention of the reader that if $f:M\to M$ is the geodesic flow corresponding to a compact hyperbolic manifold, then it satisfies the hypotheses of the above theorem, and thus Theorem A follows directly from Theorem B.

\begin{remark}
The previous theorem relies on accessibility and stable ergodicity, and thus its first part can be extended to more general situations. However, the smoothness part uses Corollary 0.2 of \cite{wilkinson2013cohomological} which requires center dimension equal to $1$ plus isometric behavior. Since the smoothness of the stationary measure does not appear on previous works, the authors opted to present Theorem B in its present form and leave for the interested reader the (direct) generalization of the first part.
\end{remark}

\section{Dynamical consequences}

In this part we deduce some consequences for the dynamics of $T$ of the existence of an stationary measure equivalent to Lebesgue. The set of $P$ - stationary measures is a simplex, thus by standard methods, if $\nu$ is $P$ - stationary for $f$ one deduces the existence of a extremal $P$ - stationary measure;  this implies that $\mathbb{Q}_{\nu}$ is ergodic for $T$, and we assume this to be case in what follows. We also suppose that (as given by Theorem B), the density of $\nu$ with respect to $\mu$ is continuous.

\smallskip

As explained in \ref{subsec:abscont}, $f^{-1}_{\ast}\nu=h\nu$, and thus $P$ induces an operator $P^{\dagger}:L^2(\nu)\to L^2(\nu)$ characterized by $\int Pf\cdot g d\nu=\int f\cdot P^{\dagger}g d\nu$ for every $f,g\in L^2(\nu)$. Since we are assuming that $\nu$ is equivalent to $\mu$ and $\int \log \frac{p}{q}d\mu=0$, by Theorem $1$ of \cite{Conze2000} it follows necessarily that $h=\frac{p}{q\circ T}$, and with this is direct to check that $P^{\dagger}=P$ is self-adjoint.

Let us make the following simple remark: given $\phi \in L^r(M,\nu), r\geq 1$ the function $\tilde{\phi}=\phi\circ X_0$ is in $L^r(\Omega,\mathbb{Q}_{\nu})$ and 
\[
\int  \tilde{\phi}(\omega)d\mathbb{Q}_{\nu}(\omega)=\int \phi(x) d\nu(x).    
\]
In the skew-product representation referred before, we think $\tilde{\phi}$ as a function depending only on $M$ coordinates in $X$. We then have the following.

\begin{theorem}
Let $\phi: M\to \Real$ be a measurable function. Then for Lebesgue almost every $x\in M$ it holds:
\begin{enumerate}
\item if $\phi \in L^1(M,\mu)$ then 
\[
    \frac{1}{n}\sum_{k=0}^{n-1} \phi\left(\alpha_k\cdots \alpha_1(x)\right)\xrightarrow[n\to\infty]{}\int \phi d\nu   \quad \mP_x-a.e.(\alpha_k)_{k\geq 1}\in \Sigma.
\] 
\item If $\phi=\psi-P\psi$ (or, more generally, $\phi=(I-P)^{1/2}\psi$) for some $\psi \in L^2(M,\mu)$, then
\[
   \frac{1}{\sqrt{n}}\sum_{k=0}^{n-1} \phi\left(\alpha_k\cdots \alpha_1(x)\right)\xrightarrow[n\to\infty]{\mathrm{dist}}\mathcal{N}(0,\sigma^2)   \quad \mP_x-a.e.(\alpha_k)_{k\geq 1}\in \Sigma, 
\]
where $\mathcal{N}(0,\sigma^2)$ denotes the normal standard distribution centered at zero, with variance $\sigma^2=\|\psi\|_{L^2(\nu)}^2-\|P\psi\|_{L^2(\nu)}^2$.
\end{enumerate}
\end{theorem}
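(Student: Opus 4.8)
The plan is to transfer both statements to the dynamical system $T:(\Omega,\mathbb{Q}_\nu)\to(\Omega,\mathbb{Q}_\nu)$ and apply, respectively, Birkhoff's ergodic theorem and the Kipnis--Varadhan martingale central limit theorem for reversible (self-adjoint) Markov chains, then deduce the quenched ($\mP_x$-a.e.) versions from the annealed ones via a Fubini/disintegration argument together with the fact that $\nu$ is equivalent to $\mu$ (hence to Lebesgue). First I would set $\tilde\phi=\phi\circ X_0\in L^1(\Omega,\mathbb{Q}_\nu)$ (resp. $L^2$), and observe that under the conjugacy $\Phi$ of Proposition \ref{pro.isomorskewchain} the ergodic sum $\sum_{k=0}^{n-1}\tilde\phi\circ T^k$ corresponds exactly to $\sum_{k=0}^{n-1}\phi(\alpha_k\cdots\alpha_1(x))$ on the skew-product side, integrated over $\alpha$ with $\mP_x$ and over $x$ with $\nu$.

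For (1): since $\mathbb{Q}_\nu$ is ergodic for $T$ (this is the standing assumption, coming from extremality of $\nu$), Birkhoff gives $\frac1n\sum_{k=0}^{n-1}\tilde\phi\circ T^k\to\int\phi\,d\nu$ for $\mathbb{Q}_\nu$-a.e. $\omega$. Writing $\mathbb{Q}_\nu=\int\mathbb{Q}_{\delta_x}\,d\nu(x)$ (Theorem \ref{teo.canonicalchain}) and $\Phi_*\mathbb{P}_x=\mathbb{Q}_{\delta_x}$, this means that for $\nu$-a.e. $x$ the convergence holds $\mathbb{P}_x$-a.e.; since $\nu\sim\mu\sim\mathrm{Leb}$ and $\phi\in L^1(\mu)$ implies $\phi\in L^1(\nu)$ (continuity of the density), this is exactly (1). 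For (2): here I would use that $P^\dagger=P$ on $L^2(\nu)$, established just above in the text, so $T$ is a \emph{reversible} stationary Markov chain; for $\phi=(I-P)^{1/2}\psi$ the Kipnis--Varadhan theorem (or Gordin--Lif\v{s}ic for the coboundary case $\phi=\psi-P\psi$) yields a martingale approximation $\sum_{k<n}\tilde\phi\circ T^k=M_n+R_n$ with $M_n$ an $L^2$-martingale with stationary ergodic increments and $R_n=o(\sqrt n)$ in $L^2$, whence $\frac1{\sqrt n}\sum_{k<n}\tilde\phi\circ T^k\Rightarrow\mathcal N(0,\sigma^2)$ under $\mathbb{Q}_\nu$, with $\sigma^2=\|\psi\|_{L^2(\nu)}^2-\|P\psi\|_{L^2(\nu)}^2$ read off from the martingale-increment variance (the spectral-calculus identity $\langle(I-P)\psi,\psi\rangle-\langle(I-P)P\psi,\psi\rangle$ after the reversibility reduction). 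Then one disintegrates along $X_0$ as before to pass to the quenched statement for $\nu$-a.e., hence Lebesgue-a.e., $x$.

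The step I expect to be the main obstacle is upgrading the annealed CLT (convergence in distribution under $\mathbb{Q}_\nu$) to the quenched CLT (convergence in distribution under $\mP_x$ for a.e.\ $x$): in general a martingale-approximation CLT for the stationary chain does not immediately give the CLT started from a fixed point, because the error term controlled in $L^2(\mathbb{Q}_\nu)$ need not be $o(\sqrt n)$ along $\mathbb{P}_x$ for individual $x$. The standard route is to invoke a quenched martingale CLT (e.g.\ Derriennic--Lin, or the reversibility-based argument of Kipnis--Varadhan which already yields the quenched statement for reversible chains), checking that the centering $\int\phi\,d\nu=0$ and the variance are $x$-independent; alternatively one shows the negligible term is small $\mP_x$-a.e.\ using the Markov property and Borel--Cantelli along a subsequence plus a maximal inequality. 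For part (1) the analogous passage is harmless since a.e.\ convergence is preserved under disintegration of an equivalent measure. I would also remark that ergodicity of $\mathbb{Q}_\nu$ for $T$ — needed for Birkhoff and for identifying the limit — is exactly the hypothesis imposed at the start of the section, following from extremality of $\nu$ in the simplex of $P$-stationary measures.
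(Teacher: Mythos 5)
Your proposal is correct and follows essentially the same route as the paper: transfer $\phi$ to $\tilde\phi=\phi\circ X_0$ on $(\Omega,\mathbb{Q}_\nu)$, apply Birkhoff's ergodic theorem for (1) and Gordin--Lif\v{s}ic / Kipnis--Varadhan (using the self-adjointness of $P$ on $L^2(\nu)$) for (2), then disintegrate $\mathbb{Q}_\nu=\int\mathbb{Q}_{\delta_x}\,d\nu(x)$ via Proposition~\ref{pro.desintegra} to obtain the quenched ($\mP_x$-a.e.) statements for $\nu$-a.e., hence Lebesgue-a.e., $x$. The one place you go beyond the paper is in flagging the annealed-to-quenched passage for the CLT as a genuine subtlety; the paper simply cites the classical references without comment, so your remark that one must either use a quenched martingale CLT or control the remainder $\mP_x$-a.e.\ is a fair and useful caveat rather than a divergence in method.
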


\begin{proof}
Note that since the density of $\nu$ with respect to $\mu$ is continuous, in particular any $\phi\in L^r(\mu)$ represents a function in $L^r(\nu)$ which we denote by the same letter; thus with no loss of generality we consider $\phi \in L^r(\nu)$.
The first part is direct consequence of Birkhoff's ergodic theorem applied to the dynamical system $(T,\mathbb{Q}_{\nu})$ together with Proposition \ref{pro.desintegra}. The second part follows from the same referred Proposition, and a by now classical result of Gordin and Lifsic \cite{GorLif} (respectively, Kipnis and Varadhan \cite{KipVar} when $\phi=(I-P)^{1/2}\psi$; here we use that $P$ in $L^2(\nu)$ is self adjoint).
\end{proof}

\smallskip

Given the skew-product $F(\alpha,x)=(\sigma(\alpha),\alpha_1(x))$ defined in (\ref{eq.skew}), for each fixed $\alpha \in \Sigma$, we define $F_{\alpha}: M \to M$ by the projection on the second coordinate of $F,$ that is, $F_\alpha(x) = \pi_2(F(\alpha,x)).$ Thus $F^n_\alpha(x) = \alpha_n \dots \alpha_1(x)$ for all $n\in \mathbb{N}.$  We say that $x$ is recurrent for $F_\alpha$ if $x \in \omega_{F_\alpha}(x):= \{y\in M: \exists (n_k)_k \to \infty \mbox{ such that } \lim_{k\to \infty} F_\alpha^{n_k}(x)=y\}.$ 

Let $\mathcal{O}_f(x)$ denote the orbit of $x$, and by $\mathcal{O}_{F_\alpha}(x)$ the orbit of $x$ by projection on the second coordinate the of skew-product $F$.  

\smallskip

\begin{proposition}\label{recorrenciafibra}
Let $f$ time-one map of the geodesic flow whose Markov chain determined by $(f,p)$ admits a stationary measure $\nu$ equivalent to $\mu.$ Then for Lebesgue almost every $x\in M$ we have that: 
\begin{enumerate}
    \item $x$ is recurrent for $F_\alpha$, for $\mathbb{P}_x$-almost every $\alpha\in \Sigma.$
    \item The orbit $\mathcal{O}_{F_\alpha}(x)$ is dense in $M$ for $\mathbb{P}_x$-almost every $\alpha \in \Sigma.$ 
\end{enumerate}
 \end{proposition}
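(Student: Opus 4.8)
The plan is to lift the statement to the canonical chain $(T,\mathbb{Q}_\nu)$ on $\Omega=M^{\Nat}$, establish the two almost-everywhere facts there by soft ergodic-theoretic arguments, and then push them down to $X=\Sigma\times M$ using the conjugacy $\Phi$ of Proposition~\ref{pro.isomorskewchain} together with the disintegration $m_C=\int_M \mathbb{P}_x\,d\nu(x)$ of Proposition~\ref{pro.desintegra}. Two observations will be used throughout. First, $\nu$ is the unique $P$-stationary measure equivalent to $\mu$ (Hopf's ergodic theorem, as recalled after Theorem~\ref{teo.existestationary}), hence it is extremal in the simplex of $P$-stationary measures and so $\mathbb{Q}_\nu$ is ergodic for $T$. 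Second, $\nu$ is equivalent to the smooth volume $\mu$, hence it has full support in $M$. I will also use repeatedly that under $\Phi$ the $n$-th coordinate of $\omega=\Phi(\alpha,x)$ equals $\omega_n=F_\alpha^{\,n}(x)=\alpha_n\cdots\alpha_1(x)$, so $\mathcal{O}_{F_\alpha}(x)=\{\omega_n:n\ge0\}$, and that $\Phi$ is a genuine bijection off the $\nu$-null (indeed Lebesgue-null) set of sequences passing through a point $x$ with $f^2x=x$.

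For part (1) I would apply the Poincar\'e recurrence theorem to the measure-preserving system $(T,\mathbb{Q}_\nu)$ on the compact metrizable space $\Omega$: it produces a Borel set $R\subset\Omega$ of $T$-recurrent points with $\mathbb{Q}_\nu(R)=1$. Put $\tilde R=\Phi^{-1}(R)\in\mathcal{B}_C$, so $m_C(\tilde R)=1$. If $(\alpha,x)\in\tilde R$ then, setting $\omega=\Phi(\alpha,x)$, we have $T^{n_k}\omega\to\omega$ along some $n_k\to\infty$; reading the $0$-th coordinate in the product topology gives $F_\alpha^{\,n_k}(x)=\omega_{n_k}\to\omega_0=x$, i.e.\ $x\in\omega_{F_\alpha}(x)$. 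Disintegrating, $1=m_C(\tilde R)=\int_M \mathbb{P}_x(\tilde R_x)\,d\nu(x)$ with each integrand $\le 1$ forces $\mathbb{P}_x(\tilde R_x)=1$ for $\nu$-a.e.\ $x$, hence for Lebesgue-a.e.\ $x$; for any such $x$ and $\mathbb{P}_x$-a.e.\ $\alpha$ one has $(\alpha,x)\in\tilde R$, whence $x$ is recurrent for $F_\alpha$. The measurability of $x\mapsto\mathbb{P}_x(\tilde R_x)$ is part of Rokhlin's disintegration theorem.

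For part (2) I would fix a countable basis $\{U_j\}_{j\ge1}$ of nonempty open subsets of $M$; since $\nu$ has full support, $\mathbb{Q}_\nu([U_j]_0)=\nu(U_j)>0$ for every $j$. Applying Birkhoff's ergodic theorem to the ergodic system $(T,\mathbb{Q}_\nu)$ and the indicators $\chi_{[U_j]_0}$, and noting that $\chi_{[U_j]_0}(T^k\omega)=\chi_{U_j}(\omega_k)$, one gets that for $\mathbb{Q}_\nu$-a.e.\ $\omega$ the frequency of $k$ with $\omega_k\in U_j$ converges to $\nu(U_j)>0$, so $\omega_k\in U_j$ for infinitely many $k$. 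Intersecting the countably many resulting full-measure sets yields a Borel set $D\subset\Omega$ with $\mathbb{Q}_\nu(D)=1$ such that $\{\omega_k:k\ge0\}$ meets every $U_j$, hence is dense in $M$, for every $\omega\in D$. Transporting $D$ back by $\Phi$ and disintegrating exactly as in part (1), I obtain that for Lebesgue-a.e.\ $x$ and $\mathbb{P}_x$-a.e.\ $\alpha$ the orbit $\mathcal{O}_{F_\alpha}(x)=\{\alpha_n\cdots\alpha_1(x):n\ge0\}$ is dense in $M$.

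I do not expect a genuine obstacle: once Propositions~\ref{pro.isomorskewchain} and~\ref{pro.desintegra} are available the argument is bookkeeping, and the substantive inputs (Poincar\'e recurrence, Birkhoff, full support of $\nu$) are standard. The only places demanding real care — and the only places the hypotheses truly enter — are the faithful translation between statements about $\omega\in\Omega$ and statements about $F_\alpha$ on $M$ (via $\omega_n=F_\alpha^{\,n}(x)$, plus extracting convergence of the $0$-th coordinate in the recurrence case), the removal of the exceptional null set where $\Phi$ fails to be invertible, and the upgrade of ``$\nu$-almost every'' to ``Lebesgue-almost every'', which is legitimate precisely because $\nu$ is equivalent to $\mu$. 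Making sure these measure-theoretic passages, together with the measurability assertions supplied by Rokhlin's theorem, are carried out consistently is the only mildly delicate point.
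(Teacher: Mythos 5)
Your proof is correct, but it takes a genuinely different route from the paper's. The paper works directly with the one-dimensional random walk on $\Z$: it invokes the standard RWRE result (Theorem~2.1 of \cite{RWRE}) that in the symmetric case the walk oscillates, $\liminf S_n=-\infty$ and $\limsup S_n=+\infty$ for $\mathbb{P}_x$-a.e.\ $\alpha$. Since the walk moves by $\pm1$, oscillation forces it to visit \emph{every} integer level infinitely often, so the fibered orbit $\mathcal{O}_{F_\alpha}(x)=\{f^{S_n}(x):n\ge0\}$ actually contains the entire two-sided $f$-orbit of $x$; exact returns to level $0$ give part~(1), and density of the $f$-orbit for $\mu$-a.e.\ $x$ (conservativity plus transitivity) gives part~(2). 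You instead lift to the canonical chain $(T,\mathbb{Q}_\nu)$, apply the metric Poincar\'e recurrence theorem for part~(1) and Birkhoff's ergodic theorem over a countable base of open sets (using full support of $\nu$) for part~(2), and then push the resulting full-measure sets back down through $\Phi$ and the Rokhlin disintegration $m_C=\int\mathbb{P}_x\,d\nu$. Your approach is softer and arguably more robust, needing only the machinery already assembled in Section~2 rather than an external RWRE oscillation theorem; its price is that part~(1) yields only topological recurrence $F_\alpha^{n_k}(x)\to x$, whereas the paper's argument produces exact returns $F_\alpha^{n_k}(x)=x$ and, more structurally, shows that $\mathbb{P}_x$-a.s.\ the random orbit equals the deterministic $f$-orbit of $x$. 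Both conclusions suffice for the proposition as stated. One small remark: your ergodicity of $(T,\mathbb{Q}_\nu)$ via uniqueness of $\nu$ is sound, though the paper simply postulates ergodicity at the start of the section; in either case it is available for Birkhoff, and your part~(1) does not need it at all.
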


\begin{proof}
The first part follows for example by Theorem 2.1 in \cite{RWRE}, which implies that in the symmetric case, for $\nu$ almost every $x$ (and hence for $\mu$ almost every $x$) it holds that $\lim \inf_{n\to \infty}X_n = -\infty$ and $\lim \sup_{n\to \infty}X_n = \infty$ with full $\mathbb{P}_x$ probability. Note that for $\mathbb{P}_x-$almost every $\alpha \in \Sigma$ there are an infinity of $n_k \in \mathbb{N},$ such that $\alpha_{n_k}\dots\alpha_1(x)= x.$ Then $\lim_{n_k\to \infty}F_{\alpha}^{n_k}(x) = x.$ 

For the second part,  observe that since $f$ is conservative and topologically transitive, it holds  $\mu\big(\{x\in M: \mathcal{O}_f(x) \mbox{ is dense in } M\}\big)=1$ and, by the previous part $\mu\big(\{x\in M: x\text{ is recurrent for }F_\alpha\, \, \text{for} \; \mathbb{P}_x$ - almost every $\alpha\in \Sigma.\}\big)=1.$ It follows that for $\nu$ - almost every $x\in M$ the orbit $\mathcal{O}_{F_\alpha}(x)$ is dense in $M$ for $\mathbb{P}_x$ - almost every $\alpha\in \Sigma$.   
\end{proof}

\smallskip

To simplify the presentation for the last part we assume now that $S$ is a two-dimensional compact hyperbolic surface, and $f:M=T^1S\to M$ is the time-one map of the geodesic flow; in particular $\dim E^{\ast}=1 \forall\ast=s,c,u$. For $(\alpha,x)\in X$ define $A^{u}(\alpha,x)=A^{u}(\alpha_1,x)$ by
\[
A^u(\alpha,x):= \|Df^{\alpha_1}_x|E^{u}_x\|. 
\]
Then Birkhoff's ergodic theorem guarantees the existence of $m_{C}$ - almost every $(\alpha,x)$ of the Lyapunov exponent
\begin{align*}
\lim_{n\to\infty}\frac{1}{n}\sum_{k=0}^{n-1} \log A^{u}(F^k(\alpha,x))=\int \log A^{u} dm_{C}.
\end{align*}
Using \eqref{eq.PHI} and Theorem \ref{teo.canonicalchain} we obtain,
\begin{align*}
 &\int \log A^{u} dm_{C}=\int \log A^{u}\Phi^{-1}(\omega)d\mathbb{Q}_{\nu}(w)\\
 &=\int \log \|Df_x|E^u_x\|p(x)d\nu(x)+ \int \log \|Df^{-1}_x|E^u_x\|q(x)d\nu(x).  
\end{align*}
On the other hand, the existence of the limit together with the recurrence given in Proposition \ref{recorrenciafibra} implies that this limit has to be zero, which in turn leads to
\begin{equation}
\int \log \|Df_x|E^u_x\|p(x)d\nu(x)=- \int \log \|Df^{-1}_x|E^u_x\|q(x)d\nu(x).
\end{equation}
If $N$ is an algebraic surface, then the derivative of $f$ has constant exponents and  the above implies (quite indirectly) that
\[
    \int p(x)d\nu(x)=\int q(x)d\nu(x).
\]

\noindent\textbf{Question:} If $\nu$ is the $P$ - stationary measure equivalent to $\mu$, does necessarily the previous equality hold?

\medskip

\section*{Acknowledgments}

The results in this article are part of the PhD dissertation of the second author, who was co-advised by the first author and S\^onia Pinto-de-Carvalho. Both authors thank S\^onia for the interest deposited in this project. We learned about random walks defined by dynamical systems in a mini-course taught to B. Fayad in the V Brazilian School in Dynamical Systems; we thank Fayad for introducing us to this type of mathematics. 

Finally, we thank for the reviewer for the suggestions that improved our manuscript.

\section*{Appendix: Anosov Case}

Here we indicate how the discussion in this article is also valid when $f$ is a ($\mathcal{C}^2$. conservative) Anosov diffeomorphism. Since this case was already studied in the literature (cf. \cite{Conze2000}, \cite{simpleanosov}) we limit to point out succinctly the necessary modifications.

\smallskip

\begin{theorem}
Let $f:M\to M$ be a conservative Anosov diffeomorphism of class $\mathcal{C}^2$ and let $p:M\to (0,1)$ be a H\"older continuous function, $q=1-p, \varphi=\frac{p}{q}$. Then there exists a $\mathcal{C}^2$ neighborhood $N$ of $f$ such that if $g\in N$ is conservative then the Random Walk defined $(g,p)$ has a $P$ - stationary measure equivalent to the volume of $M$,  if and only if for every $g$ - invariant probability measure $\eta$ it holds $\int \log \varphi\cdot d\eta =0$. 

Moreover, the density of the $P$ - stationary measure is continuous. If furthermore $p$ is differentiable, then the $P$ - stationary measure is a smooth volume on $M$.
\end{theorem}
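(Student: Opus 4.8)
The plan is to mirror the proof of Theorem~B, replacing its two ingredients --- stable ergodicity and the Katok--Kononenko/Wilkinson description of H\"older coboundaries via $su$-loop functionals --- by their classical counterparts for Anosov systems: structural stability and ergodicity of conservative Anosov diffeomorphisms, and Livschitz's theorem. First I would fix the neighborhood. By structural stability of Anosov diffeomorphisms there is a $\mathcal{C}^1$ neighborhood of $f$ consisting of Anosov diffeomorphisms, and a conservative $\mathcal{C}^2$ Anosov diffeomorphism is ergodic with respect to its volume by the classical Hopf argument; being ergodic with respect to a fully supported measure, it is in particular topologically transitive. Intersecting, one gets a $\mathcal{C}^2$ neighborhood $N$ of $f$ such that every conservative $g\in N$ is Anosov, transitive and $\mu$-ergodic. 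As throughout the paper the standing hypothesis $\int\log\varphi\,d\mu=0$ is in force, and $\log\varphi$ is H\"older since $p$ is H\"older and bounded away from $0$ and $1$.

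Now fix a conservative $g\in N$. By Theorem~\ref{teo.existestationary} the walk defined by $(g,p)$ admits a $P$-stationary measure equivalent to $\mu$ if and only if $\log\varphi$ is an integrable coboundary over $g$, i.e.\ $\log\varphi=\phi\circ g-\phi$ for some $\phi\in L^1(\mu)$. For the transitive Anosov map $g$, Livschitz's theorem (classical; it is also the degenerate $E^c=\{0\}$ instance of \cite{Katok1996, wilkinson2013cohomological}) gives that a H\"older function is an $L^1(\mu)$-coboundary if and only if it admits a H\"older transfer function, and this holds if and only if $\sum_{j=0}^{n-1}\log\varphi(g^{j}x)=0$ for every $g$-periodic point $x$ of period $n$. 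It then remains only to identify this periodic-orbit condition with the invariant-measure condition in the statement: that ``$\int\log\varphi\,d\eta=0$ for every $g$-invariant $\eta$'' implies vanishing of all periodic sums is trivial, since the normalized counting measure on a periodic orbit is $g$-invariant; conversely, if $\log\varphi=\phi\circ g-\phi$ with $\phi$ H\"older then $\phi\in L^1(\eta)$ for every $g$-invariant $\eta$, whence $\int\log\varphi\,d\eta=\int(\phi\circ g-\phi)\,d\eta=0$. Chaining the equivalences yields the first assertion.

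For the regularity statement, if $p$ is differentiable then $\log\varphi$ is at least $\mathcal{C}^1$, and the regularity theory for the Livschitz equation over Anosov diffeomorphisms (again subsumed by \cite{wilkinson2013cohomological}; classically due to de la Llave--Marco--Mori\-y\'on) shows that the transfer function $\phi$ is as regular as $\log\varphi$, in particular $\mathcal{C}^{\infty}$ when $p$ is. Since, as recorded after Theorem~\ref{teo.existestationary}, the density of $\nu$ with respect to $\mu$ is a constant multiple of $e^{\phi}/q$, it is continuous in general and smooth when $p$ is, so $\nu$ is then a smooth volume.

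No serious obstacle is expected: the only genuinely new bookkeeping relative to Theorem~B is the replacement of accessibility and $su$-loop functionals by transitivity and Livschitz's periodic-orbit obstruction, and both are classical. If one prefers, one can simply note that a transitive Anosov diffeomorphism is accessible in the degenerate partially hyperbolic sense $E^c=\{0\}$, and that for such a map the $su$-loop functional of Definition~\ref{def:suloop} vanishes on a H\"older $\psi$ exactly when all periodic sums of $\psi$ vanish; in that reading the theorem is literally the $\dim E^c=0$ case of the mechanism behind Theorem~B, with the $su$-loop condition rewritten --- via Livschitz --- as a condition on invariant measures.
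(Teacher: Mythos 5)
Your proof is correct and follows essentially the same route as the paper: structural stability plus ergodicity of conservative $\mathcal{C}^2$ Anosov maps to fix the neighborhood $N$, Theorem~\ref{teo.existestationary} to reduce to an integrable-coboundary condition, and Livschitz theory (measurable rigidity, periodic-orbit criterion, and smooth regularity of the transfer function) in place of the Katok--Kononenko/Wilkinson $\su$-loop machinery used in Theorem~B. The only difference is that you explicitly pass through the periodic-orbit formulation of Livschitz and then translate it into the invariant-measure condition in the statement, whereas the paper states Livschitz directly in the invariant-measure form; these are classically equivalent, so the arguments coincide.
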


\smallskip

The proof of this result follows exactly the same lines of Theorem B by using Livshitz' theory for hyperbolic maps instead of \cite{wilkinson2013cohomological}. Namely, if $f:M\to M$ is Anosov and $\varphi:M\to\Real_{>0}$ is H\"older, then there exists solution of the cohomological equation $\log\varphi=\phi\circ f-\phi$ if and only if $\int \log\varphi d\eta=0$
for every $f$-invariant measure $\eta$. The solution of the equation also has rigidity properties as in Theorem B, i.e.\@ if $\varphi$ is differentiable then the solution is differentiable, provided that it exists. See \cite{HK} for a discussion of these (classical) results. 

We remark that $\mathcal{C}^2$ conservative Anosov diffeomorphisms are ($\mathcal{C}^2$ - stably) ergodic, as proven by Anosov \cite{AnosovThesis}.

\bibliographystyle{siam}
\bibliography{bib}
\end{document}